\def\az{\alpha}
\def\a{\mathbf{a}}
\def\v{\mathbf{v}}
\def\dist{{\mathop\mathrm{\,dist}}}
\def\sdist{{\mathop\mathrm{\,dist_{s}}}}
\def\ez{\epsilon}
\def\bint{{\ifinner\rlap{\bf\kern.35em--}
\int\else\rlap{\bf\kern.45em--}\int\fi}\ignorespaces}
\def\bbint{{\ifinner\rlap{\bf\kern.35em--}
\hspace{0.078cm}\int\else\rlap{\bf\kern.45em--}\int\fi}\ignorespaces}
\newcommand{\R}{\mathbb{R}}
\newtheorem{thm}{Theorem}[section]
\newtheorem{lem}[thm]{Lemma}
\newtheorem{cor}[thm]{Corollary}
\numberwithin{equation}{section}
\theoremstyle{remark}
\def\bint{{\ifinner\rlap{\bf\kern.35em--}
\int\else\rlap{\bf\kern.45em--}\int\fi}\ignorespaces}
\newcommand{%
	\def\svgwidth{300 pt}
	\import{./}{.pdf_tex}
}[1]{%
	\def\svgwidth{300 pt}
	\import{./}{#1.pdf_tex}
}
\title[Strong stability of crystals]{Strong stability for the Wulff inequality\\ with a crystalline norm}
\author{Alessio Figalli, Yi Ru-Ya Zhang}
\date{\today}
\address{ETH Z\"urich, Department of Mathematics, R\"amistrasse 101, 8092, Z\"urich, Switzerland}
\email{alessio.figalli@math.ethz.ch}  
\email{yizhang3@ethz.ch}
\thanks{Both authors have received funding from the European Research Council under the Grant Agreement No.
721675 ``Regularity and Stability in Partial Differential Equations (RSPDE)''}
\subjclass[2000]{49Q10, 49Q20}
\keywords{Wulff shape, stability}
\begin{document}
\maketitle

\begin{abstract}
Let $K$ be a convex polyhedron and $\mathscr F$ its Wulff energy, and let $\mathscr C(K)$ denote the set of convex polyhedra close to $K$ whose faces are parallel to those of $K$. 
We show that, for sufficiently small $\ez$, all $\ez$-minimizers belong to $\mathscr C(K)$.

As a consequence of this result we obtain the following sharp stability inequality for crystalline norms: 
There exist $\gamma=\gamma(K,n)>0$ and $\sigma=\sigma(K,n)>0$ such that,
whenever $|E|=|K|$ and $|E\Delta K|\le \sigma$ then
 $$
 \mathscr F(E) - \mathscr F(K^\a)\ge \gamma |E \Delta K^\a| \qquad \text{for some }K^\a \in \mathscr C(K).
 $$
In other words, the Wulff energy $\mathscr F$ grows very fast (with power $1$) away from the set $\mathscr C(K).$
The set $K^\a \in \mathscr C(K)$ appearing in the formula above can be informally thought as a sort of ``projection'' of $E$ on the set $\mathscr C(K).$

Another corollary of our result is a very strong rigidity result for crystals: For crystalline surface tensions, minimizers of $\mathscr F(E)+\int_E g$ with small mass are polyhedra with sides parallel to the ones of $K$. In other words, for small mass, the potential energy cannot destroy the crystalline structure of minimizers. This extends to arbitrary dimensions a two-dimensional result obtained in \cite{FM2011}.
\end{abstract}

\tableofcontents

\section{Introduction}
The Wulff construction \cite{W1901} gives a way to determine the equilibrium shape of a droplet or crystal of fixed volume inside a separate phase (usually its saturated solution or vapor). Energy minimization arguments are used to show that certain crystal planes are preferred over others, giving the crystal its shape. 

The anisotropic surface energy is a natural choice in this circumstance. 
Given a  convex, positive, $1$-homogeneous function $f : \mathbb R^n \to [0,+\infty)$,  we define the anisotropic surface energy for a set of finite perimeter $E \subset \mathbb R^n$ as
$$\mathscr F(E)=\int_{\partial^* E} f(\nu_E) \, d\mathcal H^{n-1}, $$
where $\nu_E$ is the measure-theoretic outer unit normal to $E$, and $\partial^* E$ is its reduced boundary. We call $f$ the surface tension for $\mathscr F$.
Observe that when $f(\nu)=|\nu|$, we obtain the notion of classical perimeter.

 Every volume-constrained minimizer  of the surface energy $\mathscr F$ is obtained by translation and dilation of a bounded open convex set $K$, called the {\it Wulff shape} of $f$. When $f(\nu)=|\nu|$, $K$ is exactly the unit ball. 
 Also $K$ can be equivalently characterized by
$$K=\bigcap_{\nu\in \mathbb S^{n-1}}\left\{x\in \mathbb R^n\colon x\cdot \nu<f(\nu)\right\}=\left\{x\in \mathbb R^n\colon f_*(x) <1 \right\}.$$
Here $f_*\colon \mathbb R^n\to [0,+\infty)$ is the dual of $f$ defined as
$$f_*(x)=\sup\{x\cdot y\colon f(y)=1\},\quad x\in \mathbb R^n.$$
The {\it Wulff inequality} states that, for any set of finite perimeter $E\subset \mathbb R^n$, one has
$$\mathscr F(E)\ge n |K|^{\frac 1 n}|E|^{\frac {n-1} n}, $$
see e.g.\ \cite{T1978, MS1986}. 
In particular,
$$\mathscr F(E)\ge \mathscr F(K)=n|K|$$
whenever $|E|=|K|$.

In recent years, a lot of attention has been given to the stability of the isoperimetric/Wulff inequality.
A quantitative but not sharp form of general Wulff inequality is first given by \cite{EFT2005}.
Later, the  sharp stability of classical isoperimetric inequality was shown by Fusco, Maggi and Pratelli in \cite{FMP2008}, and later in \cite{FMP2010} this sharp result was extended to the general Wulff inequality. More precisely, in \cite{FMP2010} the authors proved that, for any
set of finite perimeter $E\subset \mathbb R^n$ with $|E|=|K|$, one has
\begin{equation}
\label{eq:stability}
\mathscr F(E) - \mathscr F(K)\ge c(n,K) \min_{y\in \mathbb R^n}\left\{ |E\Delta (y+K)|\right\}^2,
\end{equation}
where 
$$E\Delta F= (E\setminus F)\cup (F\setminus E)$$
denotes the symmetric difference between $E$ and $F$. Finally,  Fusco and Julin \cite{FJ2014} and Neumayer  \cite{N2016} generalized this inequality to a stronger form via a technique known as the selection principle, first introduced in this class of problems by Cicalese and Leonardi \cite{CL12}.
We recommend the survey papers \cite{F-EMS12,F2017} for more details. 
\\

In this paper we focus on the case when  $\mathscr F$ is crystalline, i.e.,\ there exists a finite set $\{x_j\}_{j=1}^M\subset \mathbb R^n\setminus\{0\},$ with $M\in \mathbb N$, such that
$$f(\nu)=\max_{1\le j\le M} x_j\cdot \nu, \ \ \nu \in\mathbb S^{n-1}.$$
Then the corresponding Wulff shape $K$ is a convex polyhedron, and the dual $f_*$ is of the form 
\begin{equation}
\label{eq:f*}
f_*(x)=\sup_{1\le i\le N} \sigma_i\cdot x,\ \ x\in \mathbb R^n,
\end{equation}
for some $N\in \mathbb N$.
Here $\sigma_i$ is a vector parallel to the normal $\nu_i$ of the face $\partial K\cap V_i$ of  $K$, where $V_i$ denotes the convex cone
$$V_i=\{x\in \mathbb R^n\colon f_*(x)=\sigma_i\cdot x\}.$$
We assume that the set of vectors $\sigma_i$ is ``minimal'', i.e. 
\begin{equation}
\label{eq:minimal}
\text{$|V_i\cap K|>0$ for any $i=1,\ldots,N$.}\footnote{Note that one could always artificially add some extra vectors $\sigma$ in the definition of $f_*$ by simply choosing $\sigma$ small enough so that $f_*(x)>\sigma\cdot x$ for any $x \neq 0$.
In this way, $f_*$ is unchanged by adding $\sigma$ to the set of vectors $\{\sigma_i\}_{1\leq i \leq N}$. Thus, asking that $|V_i\cap K|>0$ for any $i$ guarantees that all the vectors $\sigma_i$ play an active role in the definition of $f_*$.}
\end{equation}

Define $\mathscr C(K)$ as the collection of bounded open convex sets close to $K$, whose faces are parallel to those of $K$, and whose volume is $|K|$. To be specific, given $\a=(a_1,\,\cdots,\,a_N)\in \mathbb R^N$ with $|\a|< 1$, let
\begin{equation}\label{star}
f^\a_*(x)=\sup_{1\le i\le N} \frac{\sigma_i\cdot x}{1+a_i} \quad \text{ and } \quad    K^\a=\{f^\a_*(x)\le 1\} .
\end{equation}
Note that $f_*=f^0_*$. Then we define 
$$
\mathscr C(K)=\big\{K^\a\,:\,\,\a\in \mathbb R^N,\,|\a| < 1,\,|K^\a|=|K|\big\}.
$$

Following \cite{FM2011}, given a set $A$ and $R>0,$ we denote by $\mathcal N_{R,K}(A)$ the $R$-neighborhood of $A$ with respect to $K$,
namely
$$
\mathcal N_{R,K}(A)=\{x \in \R^n\,:\,{\rm dist}_K(x,A)\leq R\},\qquad \text{where}\quad {\rm dist}_K(x,A)=\inf_{y\in A}f_*(x-y).
$$
Then we say that a set $E$ is an $(\ez,R)$-minimizer for the surface energy $\mathscr F$ if, for every set of finite perimeter $G\subset \mathbb R^n$ satisfying  $|E|=|G|$ and $G\subset \mathcal N_{R,K}(E)$, one has
$$\mathscr F(E)\le  \mathscr F(G)+\ez \left(\frac {|K|}{|E|}\right)^{\frac 1 n} |E\Delta G|.$$
We shall also say that  $E$ is an $\ez$-minimizer  if  $E$ is an $(\ez,R)$-minimizer with $R=+\infty.$
It is clear from the definition that an  $(\ez_1, R_1)$-minimizer is an $(\ez_2, R_2)$-minimizer  whenever $\ez_1\le \ez_2$ and $R_2\ge R_1$.

The following result is the main theorem of the paper.

\begin{thm}\label{main thm}
	Let $\mathscr F$ be crystalline, $K$ be its Wulff shape, and $E$ be a set of finite perimeter with $|E|=|K|$. Then there exist constant $\ez_0=\ez_0(n,K)>0$ such that, for any $0<\ez\le \ez_0$ and $R \geq n+1$, if $E$ is an $(\ez,R)$-minimizer of the surface energy associated to $f$
	then, up to a translation, $E=K^\a$ for some  $K^\a\in \mathscr C(K)$.
\end{thm}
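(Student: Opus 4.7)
The strategy is to combine the sharp $L^2$-stability inequality \eqref{eq:stability} of \cite{FMP2010} with the crystalline structure of $\mathscr F$ in order to rule out any $\ez$-minimizer lying outside $\mathscr C(K)$.

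First, test the $(\ez,R)$-minimality condition against the competitor $G = y+K$. Since $R \geq n+1$ exceeds the $\dist_K$-diameter of $K$, one can choose a translation $y$ so that $y+K \subset \mathcal N_{R,K}(E)$, yielding
\[
\mathscr F(E) - \mathscr F(K) \leq \ez\,|E\Delta (y+K)|.
\]
Optimizing in $y$ and comparing with \eqref{eq:stability} produces a constant $C = C(n,K)$ such that $|E\Delta(y_*+K)| \leq C\ez$; after translation we may thus assume $E$ is $L^1$-close to $K$ with deficit of order $\ez$. Standard density estimates for quasi-minimizers then upgrade this to Hausdorff closeness of $\partial E$ to $\partial K$, so that near each face $F_i = V_i \cap \partial K$ the reduced boundary $\partial^* E$ can be parametrized as a small graph in the $\nu_i$-direction.

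The heart of the argument is to show that, for $\ez \leq \ez_0(n,K)$, the outer normal $\nu_E$ belongs $\mathcal H^{n-1}$-a.e.\ to $\{\nu_1,\ldots,\nu_N\}$, so that $E$ is a convex polyhedron with facets parallel to those of $K$. Assume by contradiction that a set $S \subset \partial^* E$ of positive $\mathcal H^{n-1}$-measure has normals bounded away from every $\nu_i$. The strict crystalline gap $f(\nu) > \sigma_i \cdot \nu$ for $\nu \notin \{\nu_i\}$ (quantified uniformly on $\mathbb S^{n-1}$ by compactness and \eqref{eq:minimal}) allows one to construct a competitor $G$ obtained by replacing the relevant portion of $E$ by a polyhedral piece whose facets have normals from $\{\nu_i\}$---essentially a local Wulff projection onto the cones $V_i$. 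After correcting the volume by a small uniform dilation of $E$ (whose perimeter cost is $O(|E\Delta G|)$ and absorbed), one obtains
\[
\mathscr F(E) - \mathscr F(G) \,\geq\, c_K\, \mathcal H^{n-1}(S), \qquad |E\Delta G| \,\leq\, C_K\, \mathcal H^{n-1}(S),
\]
for constants $c_K,C_K>0$ depending only on $n$ and $K$. This contradicts $(\ez,R)$-minimality once $\ez_0 < c_K/(2C_K)$. Hence $\partial E$ only displays the normals $\nu_1,\ldots,\nu_N$; writing $E = \{f^\a_*\leq 1\}$ for the unique $\a$ matching its signed facet distances and using $|E|=|K|$, we conclude $E \in \mathscr C(K)$.

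The main obstacle is the construction in the previous paragraph: the local competitor must (i) remain inside $\mathcal N_{R,K}(E)$---this is precisely why $R \geq n+1$ is imposed, so that a full face of $K$ can be displaced if necessary; (ii) preserve the volume constraint through a uniform dilation whose first-order perimeter cost is strictly dominated by the crystalline energy gained from flattening $S$; and (iii) upgrade the naive lower bound $\mathscr F(E)-\mathscr F(G)\gtrsim \mathcal H^{n-1}(S)$ to one controlled by $|E\Delta G|$, which requires bounding the transverse height of $\partial E$ above $S$ in terms of its area deficit using the Hausdorff closeness established in the first step.
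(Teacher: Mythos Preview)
Your proposal contains a genuine gap at its core: the inequality $\mathscr F(E)-\mathscr F(G)\ge c_K\,\mathcal H^{n-1}(S)$ cannot hold for a competitor $G$ built by local polyhedral replacement on $S$. Since $f(\nu)=\max_{1\le j\le M}x_j\cdot\nu$ is a maximum of linear forms, it is affine on each cell of the normal fan of $K$, and on such a cell there is no strict convexity to exploit. Concretely, if a patch $\Sigma_0\subset\partial^*E$ has all its normals in one such cell---for instance a flat piece tilted away from $\nu_i$ in a fixed direction, which is certainly allowed by your hypothesis that $\nu$ be bounded away from $\{\nu_i\}$---then fixing the vertex $x_{j^*}$ with $f(\nu)=x_{j^*}\cdot\nu$ on $\Sigma_0$ and applying the divergence theorem to the constant field $x_{j^*}$ gives, for \emph{any} cap $\Sigma_1$ with the same relative boundary,
\[
\int_{\Sigma_1}f(\nu)\,d\mathcal H^{n-1}\;\ge\;\int_{\Sigma_1}x_{j^*}\cdot\nu\,d\mathcal H^{n-1}\;=\;\int_{\Sigma_0}x_{j^*}\cdot\nu\,d\mathcal H^{n-1}\;=\;\int_{\Sigma_0}f(\nu)\,d\mathcal H^{n-1}.
\]
So local surgery on such a $\Sigma_0$ saves no energy at all; any drop in $\mathscr F$ would have to come entirely from the volume-correcting dilation, which you correctly note is only $O(|E\Delta G|)$ and carries no definite sign. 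The ``strict crystalline gap $f(\nu)>\sigma_i\cdot\nu$'' you invoke is not the quantity governing the competitor's energy. (A secondary gap: even if one had $\nu_E\in\{\nu_1,\dots,\nu_N\}$ a.e., this alone does not give $E\in\mathscr C(K)$, since such a set may be non-convex or carry several parallel facets.)

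The paper's proof is accordingly global rather than local. It first associates to $E$ a specific $K^\a$ via the sector-wise volume constraints $|E\cap V_i^\a|=|K^\a\cap V_i^\a|$ (Lemma~\ref{Ka of E}), then builds a calibration-type field $X$ with $X(x)\in\partial K\cap V_i$ on each $V_i^\a$, so that $f(\nu)=X\cdot\nu$ on $\partial K^\a$ while $f(\nu)\ge X\cdot\nu$ everywhere. Writing $\mathscr F(E)-\mathscr F(K^\a)=\int_{\partial^*E}[f(\nu)-X\cdot\nu]\,d\mathcal H^{n-1}+II$, the excess term is shown to dominate $c\,|E\Delta K^\a|$ via slicing and a weighted relative isoperimetric inequality inside each cone---a step that hinges on the sector-wise volume balance and has no local analogue. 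The divergence remainder $II$ is controlled from below by a separate use of $(\ez,R)$-minimality (proving $E\subset(1+r_0)K^\a$), and then minimality forces $|E\Delta K^\a|=0$.
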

%

In \cite{FM2011} the following variational problem was considered
\begin{equation}
\label{eq:variational}
\min\left\{\mathscr F(E)+\int_{E} g\, dx \colon E\subset \mathbb R^n \text{ is a set of finite perimeter}\right\},
\end{equation}
where $g\colon \mathbb R^n\to [0,\,\infty)$ is a locally bounded Borel function with 
$g(x)\to \infty$
as $|x|\to \infty$.

In this model, $\mathscr F$ represents the surface energy of a droplet, while $g$ is a potential term. Hence, the minimization problem aims to understand the equilibrium shapes of droplets/crystals under the action of an external potential.
As shown in \cite[Corollary 2]{FM2011}, for $|E|\ll 1$, any minimizer of \eqref{eq:variational} is an $(\ez,n+1)$-minimizer of $\mathscr F$. Also, it was noted in \cite[Theorem 7]{FM2011}
that, when $n=2$, minimizers with sufficiently small mass are polyhedra with sides parallel to $K$.
It was then asked in \cite[Remark 1]{FM2011} whether this result would hold in every dimension.
Our results gives a positive answer to this question in all dimension, by simply applying Theorem \ref{main thm} to the rescaled set $\left(\frac{|K|}{|E|}\right)^{1/n}E$. We summarize this in the following:
\begin{cor}
\label{cor main}
Let $\mathscr F$ be crystalline, $K$ be its Wulff shape, and $g\colon \mathbb R^n\to [0,\,\infty)$ a locally bounded Borel function such that
$g(x)\to \infty$
as $|x|\to \infty$.
Let $E$ be a minimizer of \eqref{eq:variational}. There exists $m_0=m_0(n,K,g)>0$ small enough such that if $|E|\leq m_0$ then $\left(\frac{|K|}{|E|}\right)^{1/n}E \in \mathscr C(K).$ 
\end{cor}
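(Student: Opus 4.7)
The plan is to reduce to Theorem \ref{main thm} via the rescaling suggested in the text, using as a black box the observation of \cite[Corollary 2]{FM2011} that, whenever $|E|\ll 1$, any minimizer of \eqref{eq:variational} is automatically an $(\ez(|E|),n+1)$-minimizer of $\mathscr F$, with $\ez(|E|)\to 0$ as $|E|\to 0$. Heuristically this holds because $g$ is locally bounded and the decay-at-infinity hypothesis confines low-mass minimizers to a compact set, so the potential contribution $\int_E g$ produces only a small error when $E$ is compared to a nearby competitor of equal mass.

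Next, rescale by $\lambda:=(|K|/|E|)^{1/n}$, setting $\tilde E:=\lambda E$ so that $|\tilde E|=|K|$. The definition of $(\ez,R)$-minimality is engineered to be scale-invariant: the factor $(|K|/|E|)^{1/n}$ in front of $|E\Delta G|$ is exactly $\lambda$, which offsets the scaling of perimeter against symmetric difference. A direct computation using that $\mathscr F$ is $(n-1)$-homogeneous, $|\cdot|$ is $n$-homogeneous, and ${\rm dist}_K$ is $1$-homogeneous shows that, for any competitor $\tilde G$ with $|\tilde G|=|K|$ and $\tilde G\subset \mathcal N_{R,K}(\tilde E)$, the set $G:=\lambda^{-1}\tilde G$ is an admissible competitor for $E$ inside $\mathcal N_{R/\lambda,K}(E)$, and the $(\ez,n+1)$-minimality of $E$ translates into
\begin{equation*}
\mathscr F(\tilde E)\le \mathscr F(\tilde G)+\ez\,|\tilde E\Delta \tilde G|
\end{equation*}
valid for every such $\tilde G$ with $\tilde G\subset \mathcal N_{\lambda(n+1),K}(\tilde E)$. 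Consequently $\tilde E$ is an $(\ez(|E|),\lambda(n+1))$-minimizer of $\mathscr F$ with $|\tilde E|=|K|$.

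Finally, choose $m_0=m_0(n,K,g)$ so small that $|E|\le m_0$ forces both $\ez(|E|)\le \ez_0$ and $\lambda(n+1)\ge n+1$; the latter is automatic because $\lambda\to\infty$ as $|E|\to 0$. Then Theorem \ref{main thm} applies to $\tilde E$ and yields that, up to a translation, $\tilde E=K^\a$ for some $K^\a\in\mathscr C(K)$, which is exactly the claim. The argument is essentially bookkeeping: no step is genuinely hard beyond Theorem \ref{main thm} itself and the cited $(\ez,n+1)$-minimality lemma. The single point one has to watch is that the rescaling preserves, and indeed improves, the range parameter $R$ in the $(\ez,R)$-minimizer definition, but this comes for free because shrinking the volume blows up the dilation factor $\lambda$.
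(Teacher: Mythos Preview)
Your proposal is correct and follows exactly the route the paper indicates: cite \cite[Corollary 2]{FM2011} for $(\ez,n+1)$-minimality when $|E|$ is small, rescale so the volume equals $|K|$, and apply Theorem~\ref{main thm}. You have in fact spelled out the scale-invariance of the $(\ez,R)$-notion (the built-in factor $(|K|/|E|)^{1/n}$ and the scaling of $\mathcal N_{R,K}$) more carefully than the paper, which simply asserts the reduction in one sentence before stating the corollary.
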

This result is particularly interesting from a ``numerical'' viewpoint: since the space $\mathscr C(K)$ is finite dimensional, minimizers of \eqref{eq:variational} can be found explicitly by minimizing the energy functional over a small neighborhood of $K$ in this finite dimensional space.
In addition, in some explicit cases, it can be used to analytically find the exact minimizer.
For instance, in \cite{CNT19}, the authors used the two-dimensional analogue of Corollary 
\ref{cor main} to explicitly find the minimizers of some variational problems coming from plasma physics. Thanks to Corollary \ref{cor main} one can now perform the same kind of analysis also in the physical dimension $n=3$.\\

We note that while Theorem \ref{main thm} proves that all $(\ez,R)$-minimizers with $R\geq n+1$  are in $\mathscr C(K)$,
one may wonder whether all elements of $\mathscr C(K)$ are $(\ez,R)$-minimizers for $R\geq n+1$. 
This is our second result, which gives a full characterization of $\ez$-minimizers for $\mathscr F$ when $\ez$ and $|\a|$ are sufficiently small.

\begin{thm}\label{characterization}
	Let $\mathscr F$ be crystalline, $K$ be its Wulff shape,
	and $\ez_0$ be as in Theorem \ref{main thm}. Then, for any $\ez\leq \ez_0$ there exists $a_0=a_0(n,\,K,\,\ez)>0$ such that, for any $0<|\a|\le a_0$, any set $K^\a$ is an $\ez$-minimizer (and so also an $(\ez,R)$-minimizer for all $R\geq n+1$)  for $\mathscr F$.
	\end{thm}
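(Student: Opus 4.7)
The plan is to consider the auxiliary penalised functional
\[
\mathcal J(G) := \mathscr F(G) + \ez\,|K^\a \Delta G|\qquad\text{for $G$ of finite perimeter with $|G|=|K|$,}
\]
and to prove that $K^\a$ is itself the unique minimiser of $\mathcal J$. Since $\mathcal J(K^\a)=\mathscr F(K^\a)$, this minimality is exactly the $\ez$-minimality inequality required.

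First I would produce a minimiser $\hat G$ of $\mathcal J$: the penalty $\ez|K^\a\Delta G|$ confines mass near $K^\a$, so a minimising sequence is $L^1$-precompact by BV compactness, and both $\mathscr F$ and $|K^\a\Delta\cdot|$ are $L^1$-lower semicontinuous. The triangle inequality $|K^\a\Delta G'|\le|K^\a\Delta\hat G|+|\hat G\Delta G'|$ together with the minimality of $\hat G$ gives
\[
\mathscr F(\hat G)\le\mathscr F(G')+\ez\,|\hat G\Delta G'|
\]
for every competitor $G'$ with $|G'|=|K|$, so $\hat G$ is itself an $\ez$-minimiser of $\mathscr F$ (in particular an $(\ez,R)$-minimiser for every $R$). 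Since $\ez\le\ez_0$, Theorem \ref{main thm} applies and yields $\hat G=K^{\a^*}$ up to a translation; the translation is fixed by the $\mathcal J$-minimality itself, because any translate of $\hat G$ is an admissible competitor and the one minimising $|K^\a\Delta\cdot|$ must be selected.

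The remaining task is to show that $\a^*=\a$. The map $\a\mapsto\mathscr F(K^\a)$ is smooth on $\mathscr C(K)$, with a critical point at $\a=0$ on the constraint manifold $\{|K^\a|=|K|\}$, since $K$ minimises $\mathscr F$ among sets of its volume. Taylor expansions give
\[
\mathscr F(K^\a)-\mathscr F(K)\le C|\a|^2,\qquad |\mathscr F(K^\a)-\mathscr F(K^{\a^*})|\le C\,(|\a|+|\a^*|)\,|\a-\a^*|,
\]
while the positive semi-definiteness of the Hessian (with kernel the $n$-dimensional translation subspace $a_i=\sigma_i\cdot y$) delivers $\mathscr F(K^{\a'})-\mathscr F(K)\gtrsim|\a'|^2$ once the translation is fixed. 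From $\mathcal J(K^{\a^*})\le\mathcal J(K^\a)=\mathscr F(K^\a)\le\mathscr F(K)+C|\a|^2$ one infers $|\a^*|\le C'|\a|$. On the other hand, the same inequality $\mathcal J(K^{\a^*})\le\mathcal J(K^\a)$ rearranges to
\[
\mathscr F(K^\a)-\mathscr F(K^{\a^*})\ge \ez\,|K^\a\Delta K^{\a^*}|\gtrsim \ez\,|\a-\a^*|,
\]
using the comparability $|K^\a\Delta K^{\a'}|\simeq|\a-\a'|$ for small parameters. Combining the two bounds gives $\ez\lesssim|\a|+|\a^*|\lesssim|\a|$, which contradicts $|\a|\le a_0=a_0(n,K,\ez)$ once $a_0$ is chosen sufficiently small. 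Hence $\a^*=\a$, i.e.\ $\hat G=K^\a$, and the proof is complete.

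The main obstacle is the finite-dimensional sensitivity analysis on the parameter space of $\mathscr C(K)$: one must verify the $C^2$-smoothness of $\a\mapsto \mathscr F(K^\a)$, the non-degeneracy of its Hessian at $\a=0$ transversely to the $n$-dimensional translation subspace $a_i=\sigma_i\cdot y$, and the comparability $|K^\a\Delta K^{\a'}|\simeq|\a-\a'|$. These ingredients are grounded in the explicit polyhedral geometry of the $K^\a$ (the displaced supporting hyperplanes depend linearly on $\a$), but the constants need to be tracked carefully enough to make the chain of estimates above close up quantitatively in terms of $n$, $K$, and $\ez$.
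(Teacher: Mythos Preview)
Your overall strategy coincides with the paper's: set up the penalised functional $\mathcal J(G)=\mathscr F(G)+\ez\,|K^\a\Delta G|$, produce a minimiser, observe via the triangle inequality that it is itself an $\ez$-minimiser of $\mathscr F$, apply Theorem~\ref{main thm} to conclude it equals some $K^{\a^*}$, and then use $\mathcal J(K^{\a^*})\le\mathcal J(K^\a)$ to force $\a^*=\a$. This is exactly the route the paper takes.

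Where you diverge is in the finite-dimensional endgame, and there you are working harder than necessary. To bound $|\a^*|$ you invoke $C^2$-smoothness of $\a\mapsto\mathscr F(K^\a)$ together with positive-definiteness of its Hessian transverse to translations, and you rightly flag this package as the main obstacle. The paper bypasses it entirely. Lemma~\ref{lem:F C1} only establishes a $C^1$-type estimate,
\[
\bigl|\mathscr F(K^\a)-\mathscr F(K^{\a'})\bigr|\le\omega(|\a|+|\a'|)\,|\a-\a'|,
\]
and combines it with the elementary lower bound $|K^{\a'}\Delta K^\a|\ge c(n,K)\,|\a'-\a|$. The minimality $\mathscr F(K^{\a'})+\ez\,|K^{\a'}\Delta K^\a|\le\mathscr F(K^\a)$ then gives $c(n,K)\,\ez\le\omega(|\a|+|\a'|)$ unless $\a'=\a$. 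As for the smallness of $|\a'|$ (which the paper asserts without elaboration), it follows not from any second-order analysis but from the bare Wulff inequality $\mathscr F(K^{\a'})\ge\mathscr F(K)$: this yields $\ez\,|K^{\a'}\Delta K^\a|\le\mathscr F(K^\a)-\mathscr F(K)\le\omega(|\a|)\,|\a|$, so $|\a'-\a|$ and hence $|\a'|$ are as small as desired once $|\a|\le a_0(n,K,\ez)$. Thus the Hessian non-degeneracy you worry about is not needed, and your argument can be streamlined by replacing the quadratic analysis with this one observation.
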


Finally, with the help of Theorem~\ref{main thm} and ~\ref{characterization} we show the following stability inequality.

\begin{thm}\label{stability}
Let $\mathscr F$ be crystalline, and let $K$ be its Wulff shape. Then there exists $\sigma_0=\sigma_0(n,K)>0$ such that, for any set of finite perimeter $E$ satisfying $|E\Delta K| \leq \sigma_0$, there exists $K^\a\in \mathscr C(K)$ such that
$$
\mathscr F(E) - \mathscr F(K^\a)\ge \gamma |E \Delta K^\a|.
$$
\end{thm}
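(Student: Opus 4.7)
The plan is a contradiction argument via a penalized minimization whose minimizers are classified by Theorem~\ref{main thm}.

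Suppose the conclusion fails. Then there exist sets $E_k\subset\R^n$ with $|E_k|=|K|$, $|E_k\Delta K|\to 0$, and numbers $\gamma_k\downarrow 0$ such that, for every $K^\a\in\mathscr C(K)$ (allowing translates, in keeping with Theorem~\ref{main thm}),
\begin{equation}\label{eq:fail}
\mathscr F(E_k)-\mathscr F(K^\a)<\gamma_k\,|E_k\Delta K^\a|.
\end{equation}
The strategy is to replace $E_k$ by a ``surrogate'' $F_k$ that (i) is itself a $2\gamma_k$-minimizer of $\mathscr F$, hence in $\mathscr C(K)$ by Theorem~\ref{main thm}, and (ii) obeys a reverse of \eqref{eq:fail} coming from its own minimality.

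Let $F_k$ be a minimizer of
\[
\mathscr F(F)+2\gamma_k\,|F\Delta E_k|\quad\text{over}\quad\{F\subset\R^n\,:\,|F|=|K|\},
\]
restricted if necessary to a sufficiently large ball $B_M$ to ensure existence via the direct method (the penalty keeps minimizing sequences from drifting off to infinity, and the resulting $F_k$ can be shown to sit strictly inside $B_M$, so that $\mathcal N_{n+1,K}(F_k)\subset B_M$). A triangle-inequality computation using the minimality of $F_k$ shows that $F_k$ is a $(2\gamma_k,n+1)$-minimizer of $\mathscr F$: for every admissible competitor $G$ with $|G|=|K|$,
\[
\mathscr F(F_k)\le \mathscr F(G)+2\gamma_k\bigl(|G\Delta E_k|-|F_k\Delta E_k|\bigr)\le \mathscr F(G)+2\gamma_k\,|F_k\Delta G|.
\]

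For $k$ large $2\gamma_k\le \ez_0$, so Theorem~\ref{main thm} forces $F_k\in\mathscr C(K)$ (up to a translation). Applying \eqref{eq:fail} with $K^\a=F_k$ gives $\mathscr F(E_k)-\mathscr F(F_k)<\gamma_k|E_k\Delta F_k|$, while testing the minimality of $F_k$ against the admissible competitor $E_k$ yields $\mathscr F(E_k)-\mathscr F(F_k)\ge 2\gamma_k|F_k\Delta E_k|$. Combining,
\[
2\gamma_k\,|F_k\Delta E_k|\le \mathscr F(E_k)-\mathscr F(F_k)<\gamma_k\,|F_k\Delta E_k|,
\]
a contradiction regardless of whether $|F_k\Delta E_k|$ is positive or zero. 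The hard part will be to choose the ball $B_M$ so that $F_k$ genuinely qualifies as a $(2\gamma_k,n+1)$-minimizer of $\mathscr F$ (and to verify that the statement of Theorem~\ref{stability} permits translates, consistent with Theorem~\ref{main thm}); once this is settled, the mismatch between the factors $2\gamma_k$ (from minimality) and $\gamma_k$ (from the failure hypothesis) delivers the contradiction at once.
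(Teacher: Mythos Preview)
Your argument is correct and follows the same selection-principle template as the paper (penalize, show the penalized minimizer is a $(\cdot,n+1)$-minimizer, invoke Theorem~\ref{main thm}, derive a contradiction), but the implementation is genuinely different and more direct.

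The paper fixes a small $\lambda$ and works with the \emph{specific} $K^{\a_k}$ produced by Lemma~\ref{Ka of E}; its penalized functional therefore carries, in addition to a term $\big||E\Delta K^{\a_k}|-|E_k\Delta K^{\a_k}|\big|$, an extra volume-sector penalty $\sum_i\big||E\cap V_i^{\a_k}|-|K^{\a_k}\cap V_i^{\a_k}|\big|$. After applying Theorem~\ref{main thm} to get $F_k=K^{\a_k'}$, the paper still has to argue that $\a_k'=\a_k$, which is where Lemma~\ref{lem:F C1} (the $C^1$ behaviour of $\mathscr F$ on $\mathscr C(K)$) and that extra penalty are used. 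Your version bypasses this: by letting $\gamma_k\downarrow 0$ and using that the failure hypothesis is for \emph{every} $K^\a\in\mathscr C(K)$, you may apply it directly to whatever polyhedron $F_k$ the penalized problem produces; the simple penalty $2\gamma_k|F\Delta E_k|$ suffices, and no identification step (hence no appeal to Lemma~\ref{lem:F C1}) is needed. The trade-off is that the paper's route also pins down the $K^\a$ in the conclusion as the ``projection'' from Lemma~\ref{Ka of E}, while yours only yields abstract existence, which is all the theorem claims.

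Two remarks on the points you flagged. First, your worry about translates is not an issue: for small $y$ one has $y+K^\a=K^{\a'}$ with $a_i'=a_i+\sigma_i\cdot y$ and $|K^{\a'}|=|K^\a|=|K|$, so $\mathscr C(K)$ is closed under small translations and $F_k\in\mathscr C(K)$ outright. Second, for the existence/localization of $F_k$: testing the penalized functional against $K$ and using the Wulff inequality $\mathscr F(F_k)\ge\mathscr F(K)$ gives $|F_k\Delta E_k|\le |K\Delta E_k|\to 0$, so $F_k$ stays close to $K$ and the restriction to a large ball is innocuous; this is the same level of detail the paper itself provides.
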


The set $K^\a$ is given via Lemma~\ref{Ka of E} below, and can be informally thought as a sort of ``projection'' of $E$ on the set $\mathscr C(K).$
An immediate corollary of Theorem~\ref{stability} is that, under the constraint
$$|E\cap V_i^\a|=|K^\a\cap V_i^\a|\qquad \forall\,i=1,\ldots,N,$$
$K^\a$ is the unique minimizer for $\mathscr F$, where
$$V_i^\a=\left\{x\in \mathbb R^n\colon f^\a_*(x)= \frac{\sigma_i\cdot x}{1+a_i}\right\}.$$ 

The stability inequality provided by Theorem \ref{stability} is different from the classical one in \eqref{eq:stability}. Indeed, first of all elements in $\mathscr C(K)$ might not be a minimizer of  $ \mathscr F$ (only $K$ and its translates are minimizers). In addition, thanks to the non-smoothness of $f$, we are able to obtain a stability result for the isoperimetric inequality with a linear control (power $1$) on $|E \Delta K^\a|$.

One could understand this result geometrically in the following way:\\ If we plot the ``graph'' of $\mathscr F$ in a neighborhood of $K$, $\mathscr F$ is constant on the translates of $K$, while it increases smoothly on the space $\mathscr C(K)$ near $K$; see e.g.\ Lemma~\ref{lem:F C1} below. On the other hand, given a set $E$ close to $K$ but not in $\mathscr C(K)$,  the value of $\mathscr F$ grows very fast (linearly) from the value of $\mathscr F$ on the ``best approximation of $E$ in $\mathscr C(K)$'' provided by Lemma \ref{Ka of E}.
In other words, the energy $\mathscr F$ varies smoothly on $\mathscr C(K),$ while it has a Lipschitz singularity when moving away from $\mathscr C(K).$
\\

The paper is organized as follows. In Section 2, we introduce a method to associate any set of finite perimeter $E$ with small $|E\Delta K|$ and $|E|=|K|$, and we study the behaviour of $\mathscr F$ on $\mathscr C(K)$. Then we prove Theorem~\ref{main thm} in Section 3. Finally, the proofs of Theorem~\ref{characterization} and Theorem~\ref{stability} are given in the last section. Some useful technical results are collected in the appendix.

\section{Associate $K^\a$ to a set of finite perimeter $E$}
We begin by setting the notation used in this paper.
We denote by $C=C(\cdot)$ and $c=c(\cdot)$ positive constant, with
the parentheses including all the parameters on which the constants depend. The constants $C(\cdot)$ and $c(\cdot)$ may
vary between appearances, even within a chain of inequalities. 
We denote by $|A|$ the Lebesgue measure of $A$, and by $\mathcal H^{\az}(A)$ the $\az$-dimensional Hausdorff measure of $A$. 
The reduced boundary of a set of finite perimeter $E$ is denoted by $\partial^* E$. By $\dist(A,\,B)$ we denote the Euclidean distance between $A$ and $B$.

As in the introduction, $K$ is the Wulff shape associated to $f_*$. We denote by $rK$ the dilation of $K$ by a factor $r$ with respect to the origin. 
Also, given $\a \in \R^N$ with $|\a|<1$, we define
$$V_i^\a= \left\{x\in \mathbb R^n\colon f^\a_*(x)= \frac{\sigma_i\cdot x}{1+a_i}\right\},$$
where $\{\sigma_i\}_{1\leq i\leq N}$ are the vectors defining $f_*$ (see \eqref{eq:f*}).

For a vector $x\in\mathbb{R}^n$, we denote by $x'$ its first $(n-1)$-coordinates, and by $x_n$ its last coordinate. We denote by $\nu_E$ is the measure-theoretic outer unit normal to a set of finite perimeter $E$, and omit the subindex if the set $E$ in question is clear from the context.

We first recall the following result. 
\begin{lem}[{\cite[Lemma 5 and Theorem 5]{FM2011}}]
\label{lem:E close K}
If E is an $(\ez,n+1)$-minimizer of $\mathscr F$ with $|E|=|K|$, then there exists $x\in \mathbb R^n$ so that
$$ |(x+E)\Delta K|\le C(n)|K|\ez. $$
In addition, up to a translation, $\partial E$ is uniformly close to $\partial K$, namely
$$
(1-r)K\subset  x+E\subset (1+r)K,\qquad r \leq C(n,K)\ez^{1/n}.
$$
\end{lem}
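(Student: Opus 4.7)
The plan is to combine the sharp quantitative Wulff inequality of Fusco--Maggi--Pratelli (inequality~\eqref{eq:stability}) with standard density estimates for almost-minimizers of the perimeter.

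For the first assertion, I would test the $(\ez,n+1)$-minimality of $E$ against the competitor $G=x+K$, where $x$ is a translation realizing the minimum in \eqref{eq:stability}. The subtle point is that $x+K$ must lie in $\mathcal N_{n+1,K}(E)$, which is not obvious a priori. To handle this, I would first establish qualitative closeness via a compactness argument: any sequence $E_k$ of $(\ez_k,n+1)$-minimizers with $\ez_k\to 0$ and $|E_k|=|K|$ must converge, up to translation and subsequence, in $L^1$ to a translate of $K$; this follows from lower semicontinuity of $\mathscr F$, together with the Wulff inequality $\mathscr F(E_k)\ge \mathscr F(K)$ and with comparison against local modifications of $E_k$ that drive it towards a translate of $K$, forcing $\mathscr F(E_k)\to \mathscr F(K)$. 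Once some translate $x_0+E$ is $L^1$-close to $K$, the optimal translation $x$ from \eqref{eq:stability} also satisfies $|x-x_0|\ll 1$ by continuity and coercivity of $y\mapsto |E\Delta(y+K)|$, and hence $x+K\subset \mathcal N_{n+1,K}(E)$ for $\ez$ small enough. Then, applying the $(\ez,n+1)$-minimality inequality together with \eqref{eq:stability},
$$
\frac{c(n)}{|K|}\,|(x+E)\Delta K|^2\le \mathscr F(E)-\mathscr F(K)\le \ez\,|(x+E)\Delta K|,
$$
and cancelling the common factor yields $|(x+E)\Delta K|\le C(n)|K|\ez$.

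For the second assertion, I would promote the $L^1$ bound to the claimed uniform closeness via density estimates. Because $K$ is a bounded convex body containing a neighborhood of the origin, $f$ and $f_*$ are equivalent to the Euclidean norm with constants depending on $K$; consequently $(\ez,n+1)$-minimality for $\mathscr F$ entails $\Lambda$-minimality for the standard perimeter on scales $\le n+1$, with additive error controlled by $\ez$. It then follows from the regularity theory of almost-minimizers that there exist $c_0=c_0(n,K)>0$ and $r_0=r_0(n,K)>0$ such that, for every $x_0\in \partial(x+E)$ and every $0<r\le r_0$,
$$
c_0\, r^n\le |(x+E)\cap B_r(x_0)|\le (1-c_0)\,r^n.
$$
If $(x+E)\not\subset (1+r)K$ with $r\le r_0$, pick a point $x_0\in \partial(x+E)$ lying outside $(1+r)K$; then a ball $B_{cr}(x_0)$ is contained in $(x+E)\setminus K$, and combining the lower density bound with the first assertion yields
$$
c_0 (cr)^n\le |(x+E)\setminus K|\le C(n)|K|\ez,
$$
so that $r\le C(n,K)\ez^{1/n}$. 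The opposite inclusion $(1-r)K\subset (x+E)$ is obtained analogously by applying the upper density estimate at a boundary point of $(x+E)$ that would lie strictly inside $(1-r)K$.

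The main obstacle is making the first step rigorous: the competitor $x+K$ in the minimality inequality must be admissible, which requires a preliminary qualitative compactness argument showing that some translate of $E$ is already $L^1$-close to $K$, and then a careful matching of that translation with the optimal one in \eqref{eq:stability}. Once this is done, the sharp factor $\ez$ in the first bound arises precisely from cancelling $|(x+E)\Delta K|$ on both sides of the quantitative Wulff inequality, while the density-estimate argument for the second bound is a classical ingredient of regularity theory and is essentially independent of the specific structure of $\mathscr F$ beyond the uniform equivalence of $f$ with the Euclidean norm.
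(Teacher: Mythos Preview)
The paper does not prove this lemma at all: it is quoted verbatim from \cite[Lemma~5 and Theorem~5]{FM2011} and used as a black box, so there is no ``paper's own proof'' to compare against. Your sketch is in fact a reasonable outline of how the result is obtained in \cite{FM2011}: the $L^1$-estimate comes from testing minimality against a translate of $K$ and invoking the sharp quantitative Wulff inequality \eqref{eq:stability}, and the Hausdorff bound follows from density estimates for almost-minimizers.

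Two small points are worth tightening. First, the admissibility of the competitor $x+K$ is less delicate than you suggest: since $\mathcal N_{n+1,K}(E)=E+(n+1)\overline K$ and $\lambda K+\mu K=(\lambda+\mu)K$ by convexity, for \emph{any} $p\in E$ the translate $p+K$ lies in $\mathcal N_{n+1,K}(E)$; comparing with $p+K$ already yields $\mathscr F(E)-\mathscr F(K)\le 2\ez|K|$, whence the stability inequality gives a preliminary bound $\min_y|E\Delta(y+K)|\le C|K|\ez^{1/2}$, which in turn guarantees that the \emph{optimal} translate $x+K$ is admissible and lets you rerun the argument to get the sharp power. No compactness is needed. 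Second, $(\ez,R)$-minimality for $\mathscr F$ does not literally imply $\Lambda$-minimality for the \emph{Euclidean} perimeter (the equivalence $c|\nu|\le f(\nu)\le C|\nu|$ only bounds the perimeters, not their differences); the density estimates should be derived directly for the anisotropic functional, after removing the volume constraint by a standard local adjustment. Both issues are handled in \cite{FM2011}.
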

The lemma above will allows us to apply the following result to $(\ez,n+1)$-minimizers, when $\ez$ is small.

\begin{lem}\label{Ka of E}
Let $E\subset \mathbb R^n$ be a set of finite perimeter and $|E|=|K|$. There exist $\eta=\eta(n,K)>0$ and $C=C(n,K)>0$ such that the following holds:
If 
$$
(1-\eta)K\subset  E\subset (1+\eta)K
$$
then there exists a unique $\a=\a(K,\,|E\Delta K|)\in \mathbb R^N$, with $|\a|\leq C|E\Delta K|$, such that
$$|E\cap V_i^\a|=|K^\a\cap V_i^\a|\qquad \forall\,i=1,\ldots,N,$$
for some $K^\a \in \mathscr C(K)$.
\end{lem}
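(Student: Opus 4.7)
The plan is to realize $\a$ as the unique fixed point of a contraction on a small ball of $\mathbb R^N$, with the contraction constant made small by the shell assumption $(1-\eta)K\subset E\subset (1+\eta)K$. I would first define $G\colon\mathbb R^N\to\mathbb R^N$ by $G_i(\a):=|K^\a\cap V_i^\a|-|E\cap V_i^\a|$ and note that since $\{V_i^\a\}_{i=1}^N$ partitions $\mathbb R^n$ up to a null set, $\sum_i G_i(\a)=|K^\a|-|E|$; hence any zero of $G$ automatically satisfies $|K^\a|=|E|=|K|$, so $K^\a\in\mathscr C(K)$ for free. The problem therefore reduces to finding a unique small $\a$ with $G(\a)=0$, and it is natural to split
$$G_i(\a)=I_i(\a)+J_i(\a),\quad I_i(\a):=|K^\a\cap V_i^\a|-|K\cap V_i^\a|,\quad J_i(\a):=|K\cap V_i^\a|-|E\cap V_i^\a|,$$
separating the reference part $I$ (independent of $E$) from the error part $J$ carrying the $E$-dependence.

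For the reference part $I$, the crucial observation is the pyramid identity $K^\a\cap V_i^\a=\{x\in V_i^\a\colon \sigma_i\cdot x\le 1+a_i\}$, which together with the cone scaling of $V_i^\a$ (cross-sections at $\{\sigma_i\cdot x=c\}$ rescale as $c^{n-1}$) yields
$$|K^\a\cap V_i^\a|=\frac{(1+a_i)^n}{n|\sigma_i|}\,A_i(\a),\qquad A_i(\a):=\mathcal H^{n-1}\bigl(V_i^\a\cap\{\sigma_i\cdot x=1\}\bigr).$$
The analogous formula holds for $|K\cap V_i^\a|$ up to an $O(|\a|^2)$ corner correction, localized in the thin region where $\sigma_k\cdot x>1$ for some $k\neq i$ inside $V_i^\a\setminus V_i$ (which is $O(|\a|)$-thick in two transverse directions). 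Subtracting gives $I_i(\a)=na_i|K\cap V_i|+O(|\a|^2)$, so $DI(0)$ is the diagonal matrix with positive entries $n|K\cap V_i|$, invertible with inverse norm depending only on $(n,K)$, thanks to the minimality assumption~\eqref{eq:minimal}.

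For the error part $J$, the layer-cake identity $J_i(\a)-J_i(\a')=\int_{V_i^\a\Delta V_i^{\a'}}\pm(\chi_K-\chi_E)\,dx$ gives
$$|J(\a)-J(\a')|\le\sum_{i=1}^N\bigl|(V_i^\a\Delta V_i^{\a'})\cap(K\Delta E)\bigr|.$$
Since each $V_i^\a$ is a cone with apex $0$, $V_i^\a\Delta V_i^{\a'}$ has angular $\mathcal H^{n-1}$-measure $\lesssim|\a-\a'|$ on the unit sphere; meanwhile the hypothesis forces $K\Delta E\subset(1+\eta)K\setminus(1-\eta)K$, a shell of radial thickness $\sim\eta\,r_K(\omega)$ at direction $\omega$. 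Integrating in spherical coordinates yields the shell-type estimate $|J(\a)-J(\a')|\le C(n,K)\,\eta\,|\a-\a'|$ together with the baseline $|J(0)|\le C|E\Delta K|$.

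Writing $I(\a)=DI(0)\,\a+R(\a)$ with $R(\a)=O(|\a|^2)$, the equation $G=0$ becomes the fixed-point problem $\a=H(\a):=-[DI(0)]^{-1}\bigl(J(\a)+R(\a)\bigr)$. For $C_0=C_0(n,K)$ large and $\eta$ small enough depending only on $(n,K)$, $H$ maps $\overline B(0,C_0|E\Delta K|)$ into itself and is a strict contraction there (its Lipschitz constant is controlled by $\|DI(0)^{-1}\|(C(n,K)\eta+C|E\Delta K|)$, with $|E\Delta K|\lesssim\eta$ by the shell inclusion). The Banach fixed-point theorem then delivers the unique $\a$ with $G(\a)=0$, automatically satisfying $|\a|\le C|E\Delta K|$ and $K^\a\in\mathscr C(K)$. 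The main obstacle is the shell estimate for $J$, which requires combining the angular smallness of $V_i^\a\Delta V_i^{\a'}$ with the radial smallness of $K\Delta E$ uniformly in $E$; the derivative analysis of $I$ is then the cleaner half, once the pyramid structure is exploited.
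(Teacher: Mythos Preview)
Your proof is correct and follows the same overall strategy as the paper's: both define the map $G(\a)=\phi(\a)=\bigl(|K^\a\cap V_i^\a|-|E\cap V_i^\a|\bigr)_{i=1}^N$, show that its ``derivative'' near $0$ is close to an invertible diagonal matrix, and conclude by an implicit-function-type argument. There are, however, a few differences worth recording.

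\emph{Decomposition.} The paper subtracts the common core $H_i^{1-2\eta}\cap V_i^\a$ from both volumes, writing $\phi_i=\phi_{i,1}+\phi_{i,2}$ with $\phi_{i,1}(\a)=|(K^\a\cap V_i^\a)\setminus H_i^{1-2\eta}|$ and $\phi_{i,2}(\a)=-|(E\cap V_i^\a)\setminus H_i^{1-2\eta}|$; you instead split against $K$ itself, $G_i=I_i+J_i$ with $I_i(\a)=|K^\a\cap V_i^\a|-|K\cap V_i^\a|$ and $J_i(\a)=|K\cap V_i^\a|-|E\cap V_i^\a|$. Your corner estimate---that $|K\cap V_i^\a|$ differs from $\frac{1}{n|\sigma_i|}A_i(\a)$ by $O(|\a|^2)$ because the discrepancy is confined to an $O(|\a|)\times O(|\a|)$ wedge at each $(n-2)$-edge---yields the clean first-order expansion $I_i(\a)=na_i|K\cap V_i|+O(|\a|^2)$; the paper's formula for $\phi_{i,1}$ actually has a minor slip (it treats the truncated cone as if its top and bottom cross-sections had the same area), though this does not affect the argument since only invertibility of the diagonal matrix is used.

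\emph{Conclusion mechanism.} The paper passes to the Clarke generalized Jacobian $\partial_C\phi$ and invokes Clarke's Lipschitz inverse function theorem. You recast $G=0$ as the fixed-point problem $\a=-[DI(0)]^{-1}(J(\a)+R(\a))$ and apply the Banach contraction principle directly, using that $J$ has Lipschitz constant $O(\eta)$ (from the shell estimate) and $R$ has Lipschitz constant $O(|\a|)$ on small balls. This is a bit more elementary and avoids the reference to Clarke.

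\emph{Volume constraint.} Your observation that $\sum_iG_i(\a)=|K^\a|-|E|$, so any zero of $G$ automatically gives $|K^\a|=|K|$ and hence $K^\a\in\mathscr C(K)$, is a nice bookkeeping point that the paper leaves implicit.
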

\begin{proof}
Note that, by the definition of $V_i$ and $K$,
$$
\partial K\cap V_i\subset \{\sigma_i \cdot x=1\}\qquad \forall\, i=1,\ldots,N.
$$
Given $t>0$, set $H_i^t:=\{x\in \R^n \,:\,\sigma_i\cdot x<t\}$.
Then it follows by our assumption on $E$ that, for any $\a$ small enough,
\begin{equation}
\label{eq:E Via}
E\cap V_i^\a\supset H_i^{1-2\eta}\cap V_i^\a\qquad \forall\, i=1,\ldots,N.
\end{equation}
Similarly, for $\a$ small, 
\begin{equation}
\label{eq:K Via}
K^\a\cap V_i^\a\supset H_i^{1-2\eta}\cap V_i^\a\qquad \forall\, i=1,\ldots,N.
\end{equation}
Consider the map
$$
\phi\colon \a\mapsto (|K^\a\cap V_1^\a|-|E\cap V_1^\a|,\ldots, |K^\a\cap V_N^\a|-|E\cap V_N^\a|)=(\phi_1(\a),\ldots,\phi_N(\a)).
$$
Our goal is to show that $\phi(\a)=0$ for a unique  vector $\a$ satisfying $|\a|\leq C|E\Delta K|$.

It is easy to check that $\phi$ is Lipschitz, and we now want to compute its differential.
Note that, thanks to \eqref{eq:E Via} and \eqref{eq:K Via},
$$
\phi_i(\a)=|(K^\a\cap V_i^\a)\setminus H_i^{1-2\eta}|-|(E\cap V_i^\a)\setminus H_i^{1-2\eta}|=:\phi_{i,1}(\a)+\phi_{i,2}(\a).
$$
Let $d_i^\a$ be the distance from the origin to  $\partial K^\a\cap V_i^\a$. Then, by \eqref{star} we have
$$d_i^\a=\frac{1+a_i}{|\sigma_i|},$$
hence
\begin{align*}
|(K^\a\cap V_i^\a)\setminus H_i^{1-2\eta}|&=|K^\a\cap V_i^\a|-|H_i^{1-2\eta}|\cap V_i^\a|\\
&=\frac 1 n\mathcal H^{n-1}(\partial K^\a \cap V_i^\a)\bigl(d_i^\a-(1-2\eta)d_i\bigr)\\
&=\frac 1 n\mathcal H^{n-1}(\partial K^\a \cap V_i^\a)\frac{2\eta+a_i}{|\sigma_i|},
\end{align*}
and therefore
\begin{multline*}
\phi_{i,1}(\a')-\phi_{i,1}(\a)-\frac 1 n\frac{\mathcal H^{n-1}(\partial K^\a \cap V_i^\a)}{|\sigma_i|}(a_i'-a_i)\\
=\frac{2\eta+a_i'}{n|\sigma_i|}  \Bigl(\mathcal H^{n-1}(\partial K^{\a'} \cap V_i^{\a'})-\mathcal H^{n-1}(\partial K^\a \cap V_i^\a)\Bigr).
\end{multline*}
Thus, defining $$v_i:=|K \cap V_i |=\frac1n \frac{\mathcal H^{n-1}(\partial K \cap V_i)}{|\sigma_i|}, $$
since the map $\a \mapsto \mathcal H^{n-1}(\partial K^\a \cap V_i^\a)$ is Lipschitz (although not needed here, in Lemma \ref{lem:F C1} we also show how compute its differential) it follows that
$$
|\phi_{i,1}(\a')-\phi_{i,1}(\a)-v_i(a_i'-a_i)|\leq C(n,K)\,\bigl(\eta+|\a'|+|\a|\bigr)|\a'-\a|.
$$
On the other hand, since $E\subset (1+\eta)K$ and $\cap_{i=1}^N H_i^{1-2\eta}=(1-2\eta)K$, we can bound
$$
|\phi_{i,2}(\a')-\phi_{i,2}(\a)|\leq \bigl|(V_i^{\a'}\Delta V_i^\a) \cap \bigl((1+\eta)K\setminus (1-2\eta)K\bigr)\bigr| \leq C(n,K)\,\eta\,|\a'-\a|.
$$
Thus, if we consider the linear map $A:\R^N\to \R^N$ given by
$$A:=\left( \begin{array}{cccc}
v_1 & 0& \cdots  &0 \\
0 & v_{2} & \ddots &\vdots \\
\vdots  & \ddots & \ddots& 0 \\
0 &\cdots &0 & v_N
\end{array} \right),
$$
we proved that
$$
|\phi(\a')-\phi(\a)-A(\a'-\a)|\leq C(n,K)\,\bigl(\eta+|\a'|+|\a|\bigr)|\a'-\a|.
$$
In particular, at every differentiability point of $\phi$ we have
\begin{equation}
\label{eq:Taylor2}
|D\phi(\a)-A|\leq C(n,K)\,\bigl(\eta+|\a|\bigr).
\end{equation}
Since $\phi$ is differentiable a.e. (by Rademacher Theorem),
for every $\a$ we can define $\partial_C\phi(\a)$ as the convex hull of all the limit of gradients, namely
$$
\partial_C\phi(\a):={\rm co}\Bigl\{\lim_{j\to \infty}D\phi(a_j)\,:\,\a_j\to \a,\,\phi \text{ is differentiable at }\a_j \Bigr\},
$$ 
and it follows by \eqref{eq:Taylor2} that 
\begin{equation}
\label{eq:Taylor2}
|A_\a-A|\leq C(n,K)\,\bigl(\eta+|\a|\bigr) \qquad \forall\,A_\a \in \partial_C\phi(\a).
\end{equation}
Since $A$ has rank $N$ (recall that $v_i>0$ for any $i$, see \eqref{eq:minimal}), there exists $r_0>0$ such that $A_\a$ is invertible for every $A_\a \in \partial_C\phi(\a)$,
 provided $\eta\leq r_0$ and $|\a|\leq r_0$.
This allows us to apply the inverse function theorem for Lipschitz mappings \cite{Cla76} to deduce that
$$
\phi:B_{r_0}(0)\to \phi(B_{r_0}(0))
$$
is a bi-Lipschits homeomorphism, with bi-Lipschitz constant depending only on $n$ and $K$. In particular, since $|\phi(0)| \leq C(n,K)|E\Delta K| \leq C(n,K)\eta$ (by our assumption on $E$),
if $\eta=\eta(n,K)$ is sufficiently small with respect to $r_0$, then $\phi(B_{r_0}(0))$ contains the origin.

This means exactly that there exists a unique $\a \in B_{r_0}(0)$ such that $\phi(\a)=0$.
In addition, again by bi-Lipschitz regularity of $\phi$,
$$
|\phi(\a)-\phi(0)|=|\phi(0)|\leq C(n,K)|E\Delta K|
$$
implies that $|\a|\leq C(n,K)|E\Delta K|$, as desired.
\end{proof}
We conclude this section with a simple result on the behaviour of $\mathscr F$ on $\mathscr C(K)$,
showing that $\mathscr F|_{\mathscr C(K)}$ is $C^1$ at $K$, with zero gradient.

\begin{lem}
\label{lem:F C1}
There exists a modulus of continuity $\omega:\R^+\to \R^+$ such that the following holds: if
$|\a|+|\a'|<1$
 and $|K^\a|=|K^{\a'}|$, then
$$
\bigl|\mathscr F(K^\a)-\mathscr F(K^{\a'})\bigr| \leq \omega(|\a|+|\a'|)|\a-\a'|.
$$
\end{lem}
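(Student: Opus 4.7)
My approach is to introduce the auxiliary function $H(\a) := \mathscr F(K^\a) - (n-1)|K^\a|$: whenever $|K^\a| = |K^{\a'}|$, one trivially has $\mathscr F(K^\a) - \mathscr F(K^{\a'}) = H(\a) - H(\a')$, so the problem reduces to showing that $H$ is smooth near $\a = 0$ with $\nabla H(0) = 0$, after which a mean value argument concludes.

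\emph{Smoothness near $0$.} First I would verify that, for $|\a|$ below a threshold depending only on $n$ and $K$, the hypothesis~\eqref{eq:minimal} guarantees that $K^\a = \bigcap_i\{\sigma_i\cdot x\leq 1+a_i\}$ shares the combinatorial type of $K$. Each vertex of $K^\a$ is then obtained as the unique solution of a fixed non-degenerate $n\times n$ linear system whose data depend affinely on $\a$, so the face areas $S_i(\a) := \mathcal H^{n-1}(\partial K^\a\cap V_i^\a)$, the volume $V(\a) := |K^\a|$, and $F(\a) := \mathscr F(K^\a) = \sum_i S_i(\a)/|\sigma_i|$ are polynomial (hence real-analytic) in $\a$ on this neighborhood.

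\emph{Computing $\nabla H(0)$.} The heart of the proof is the identity obtained by applying the divergence theorem to the vector field $x$ on $K^\a$:
$$n|K^\a| = \sum_{i=1}^N \frac{1+a_i}{|\sigma_i|}S_i(\a) = F(\a) + \sum_{i=1}^N \frac{a_i}{|\sigma_i|}S_i(\a).$$
I would combine this with the first-variation formula $\partial_{a_j}V(\a) = S_j(\a)/|\sigma_j|$, obtained by noting that increasing $a_j$ by $\epsilon$ pushes the $j$-th face outward by $\epsilon/|\sigma_j|$ and adds a slab of volume $S_j(\a)\epsilon/|\sigma_j| + O(\epsilon^2)$. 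Differentiating the identity in $a_j$ at $\a = 0$ and plugging in the first-variation formula yields $\partial_{a_j}F(0) = (n-1)\partial_{a_j}V(0)$ for every $j$; hence $\nabla F(0) = (n-1)\nabla V(0)$ and $\nabla H(0) = 0$. A conceptually cleaner alternative is a Lagrange-multiplier argument: the Wulff inequality forces $\a = 0$ to be a (local) minimum of $F$ on $\{V = |K|\}$, so $\nabla F(0) \parallel \nabla V(0)$; then testing along the scaling direction $\a_\lambda := (\lambda-1)\mathbf{1}$ (for which $K^{\a_\lambda}=\lambda K$, $F(\a_\lambda) = \lambda^{n-1}n|K|$, $V(\a_\lambda) = \lambda^n|K|$) fixes the proportionality constant at $n-1$.

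\emph{Mean value and extension.} By analyticity and $\nabla H(0) = 0$, the quantity $\omega_0(r) := \sup_{|\beta|\leq r}|\nabla H(\beta)|$ is a modulus of continuity on some $[0,r_0]$ (in fact $\omega_0(r)\lesssim r$). For $\a, \a'$ with $|\a|+|\a'|\leq r_0$ and $V(\a) = V(\a')$, the segment from $\a'$ to $\a$ lies in the ball of radius $|\a|+|\a'|$, so the mean value theorem gives $|\mathscr F(K^\a) - \mathscr F(K^{\a'})| = |H(\a)-H(\a')| \leq \omega_0(|\a|+|\a'|)\,|\a-\a'|$. For the remaining range $r_0 < |\a|+|\a'| < 1$ I would use uniform Lipschitz bounds of $\mathscr F$ and volume on compact subsets of $\{|\a|<1\}$ to enlarge $\omega_0$ into a single modulus $\omega$ valid on $[0,1)$. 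The main technical obstacle is Step~2, identifying the coefficient $(n-1)$: both routes require some care, the first-variation route to justify the formula $\partial_{a_j}V = S_j/|\sigma_j|$ rigorously when edges shift with $\a$, and the Lagrange route to check the constraint qualification $\nabla V(0)\neq 0$, which follows from~\eqref{eq:minimal} since each component $\partial_{a_j}V(0) = S_j(0)/|\sigma_j|$ is strictly positive.
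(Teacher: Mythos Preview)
Your strategy is genuinely different from the paper's and, modulo the gap below, cleaner. The paper computes the first-order variation of $\mathscr F(K^\a)$ and of $|K^\a|$ separately by tracking, face by face, how moving one facet alters the areas of its neighbours, and then matches the two linearizations through the trigonometric identity~\eqref{geo}. Your route via the auxiliary function $H(\a)=F(\a)-(n-1)V(\a)$, the cone-volume identity $nV(\a)=F(\a)+\sum_i a_i S_i(\a)/|\sigma_i|$, and the first variation $\partial_{a_j}V=S_j/|\sigma_j|$ sidesteps all of that geometry; the Lagrange-multiplier variant is especially transparent.

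The gap is in your smoothness step. The claim that~\eqref{eq:minimal} forces $K^\a$ to share the combinatorial type of $K$ for small $|\a|$ is false: it holds only for \emph{simple} polytopes (each vertex lying on exactly $n$ facets), whereas~\eqref{eq:minimal} constrains facets, not vertices. The paper's Figure~\ref{fig:diff} is precisely the counterexample: the apex of a square pyramid lies on four facets, and pushing any one of them outward by an arbitrarily small amount splits the apex into an edge and creates new vertices. In that situation the vertices of $K^\a$ are not determined by a fixed non-degenerate $n\times n$ system, so your argument that $S_i$, $V$, $F$ are polynomial in $\a$ collapses. Your computation of $\nabla H(0)=0$ survives (it only needs differentiability at the single point $\a=0$), but the mean-value step requires $\nabla H$ to exist and tend to $0$ on a full neighbourhood, which you have not established.

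The approach is salvageable without combinatorial stability. The first-variation formula $\partial_{a_j}V=S_j/|\sigma_j|$ holds for arbitrary convex polytopes, and the face areas $S_j$ are Lipschitz in $\a$ even across combinatorial changes (newly created edges have length $O(|\a|)$ and perturb areas only at second order), so $V\in C^{1,1}$. Your divergence identity then rewrites as $H(\a)=V(\a)-\a\cdot\nabla V(\a)$, and a direct estimate using the Lipschitz bound on $\nabla V$ gives $|H(\a)-H(\a')|\leq C(|\a|+|\a'|)\,|\a-\a'|$.
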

\begin{figure}[ht]
	\centering
	\def\svgwidth{300 pt}
\begingroup%
  \makeatletter%
  \providecommand\color[2][]{%
    \errmessage{(Inkscape) Color is used for the text in Inkscape, but the package 'color.sty' is not loaded}%
    \renewcommand\color[2][]{}%
  }%
  \providecommand\transparent[1]{%
    \errmessage{(Inkscape) Transparency is used (non-zero) for the text in Inkscape, but the package 'transparent.sty' is not loaded}%
    \renewcommand\transparent[1]{}%
  }%
  \providecommand\rotatebox[2]{#2}%
  \newcommand*\fsize{\dimexpr\f@size pt\relax}%
  \newcommand*\lineheight[1]{\fontsize{\fsize}{#1\fsize}\selectfont}%
  \ifx\svgwidth\undefined%
    \setlength{\unitlength}{209.76377953bp}%
    \ifx\svgscale\undefined%
      \relax%
    \else%
      \setlength{\unitlength}{\unitlength * \real{\svgscale}}%
    \fi%
  \else%
    \setlength{\unitlength}{\svgwidth}%
  \fi%
  \global\let\svgwidth\undefined%
  \global\let\svgscale\undefined%
  \makeatother%
  \begin{picture}(1,0.40540541)%
    \lineheight{1}%
    \setlength\tabcolsep{0pt}%
    \put(0,0){\includegraphics[width=\unitlength,page=1]{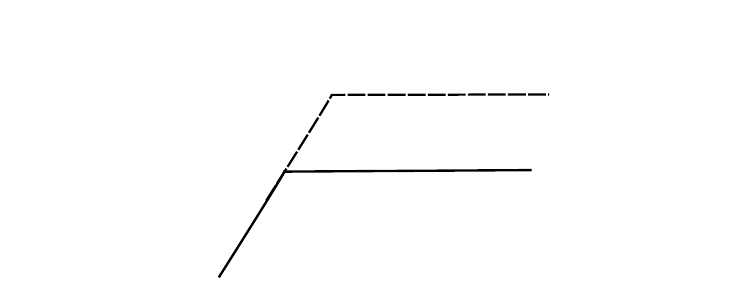}}%
    \put(0.40777825,0.17696494){\color[rgb]{0,0,0}\makebox(0,0)[lt]{\lineheight{1.25}\smash{\begin{tabular}[t]{l}$\theta_{ij}$\end{tabular}}}}%
    \put(0,0){\includegraphics[width=\unitlength,page=2]{surface.pdf}}%
    \put(0.69585687,0.18455825){\color[rgb]{0,0,0}\makebox(0,0)[lt]{\lineheight{1.25}\smash{\begin{tabular}[t]{l}$K^{\a}$\end{tabular}}}}%
    \put(0.69060259,0.29173872){\color[rgb]{0,0,0}\makebox(0,0)[lt]{\lineheight{1.25}\smash{\begin{tabular}[t]{l}$K^{\a'}$\end{tabular}}}}%
    \put(0,0){\includegraphics[width=\unitlength,page=3]{surface.pdf}}%
    \put(0.2529169,0.2917416){\color[rgb]{0,0,0}\makebox(0,0)[lt]{\lineheight{1.25}\smash{\begin{tabular}[t]{l}$b^\a_{ij}(d_i^{\a'}-d_i^\a)d_j \sin^{-1}(\theta_{ij})$\end{tabular}}}}%
    \put(0,0){\includegraphics[width=\unitlength,page=4]{surface.pdf}}%
    \put(0.4284701,0.08010112){\color[rgb]{0,0,0}\makebox(0,0)[lt]{\lineheight{1.25}\smash{\begin{tabular}[t]{l}$b^\a_{ij}(d_i^{\a'}-d_i^\a)d_i \tan^{-1}(\theta_{ij})$\end{tabular}}}}%
  \end{picture}%
\endgroup%

	\caption{We show how moving the faces   changes of the surface tension.}
	\label{fig:surface}
\end{figure}

\begin{proof}
Obviously it suffices to prove the result when $|\a'-\a|$ is small.
In this proof we denote by $o(1)$ a quantity that goes to $0$ as $|\a|+|\a'|\to 0$.

By writing $b^\a_{ij}=\mathcal H^{n-2}(\partial K^\a\cap \partial V_i^\a\cap \partial V_j^\a)$ (note that $b^\a_{ij}=0$ when the two sets $V_i^\a$ and $V_j^\a$ are not adjacent) and choosing $\theta_{ij} \in (0,\pi)$ so that
$\cos{\theta_{ij}}=\nu_i\cdot \nu_j$, a simple geometric construction (see Figure~\ref{fig:surface}) shows that, at first order in $\a'-\a$, the surface energy of $K^{\a'}$ inside $V_i^\a$ has an extra term 
$b^\a_{ij}(d_i^{\a'}-d_i^\a)f(\nu_j) \sin^{-1}(\theta_{ij})$ with respect to the surface 
energy of $K^{\a}$,
but also a negative term $-b^\a_{ij}(d_i^{\a'}-d_i^\a)f(\nu_i) \tan^{-1}(\theta_{ij})$.

Since $f(\nu_i)=d_i$ and $f(\nu_j)=d_j$ (this follows by the relation between $f$ and $K$), this gives
\begin{equation}
\label{eq:Faa'}
\begin{split}
&\mathscr F(K^{\a'})-\mathscr F(K^\a)\\
=&\sum_{1\le i ,\, j\le N,\,i\neq j} b^\a_{ij}(d_i^{\a'}-d_i^\a)d_j \sin^{-1}(\theta_{ij})-b^\a_{ij}(d_i^{\a'}-d_i^\a)d_i \tan^{-1}(\theta_{ij})+o(1)|\a-\a'|\\
=&\sum_{1\le i,\,j\le N,\,i\neq j}b^\a_{ij}(d_i^{\a'}-d_i^\a)[d_j \sin^{-1}(\theta_{ij})-d_i \tan^{-1}(\theta_{ij})]+o(1)|\a-\a'|.
\end{split}
\end{equation}
Analogously, 
the difference of the volumes of $K^{\a'}$ and $K^\a$ inside $V_i^\a$ is given, at first order in $\a'-\a$, by
$(d_i^{\a'}-d_i^\a)\mathcal H^{n-1}(\partial K^\a\cap V_i^\a)$,
therefore
$$
|K^{\a'}|-|K^\a|=\sum_{1\le i\le N}(d_i^{\a'}-d_i^\a)\mathcal H^{n-1}(\partial K^\a\cap V_i^\a)+o(1)|\a-\a'|.$$
Let us now denote by  $\pi_i^\a(O)$ the orthogonal projection of the origin onto the hyperplane $H_i^\a$ containing $\partial K^\a\cap V_i^\a$, let $H_{ij}^\a\subset H_i^\a$ denote the half-hyperplane such that 
$$
\partial H_{ij}^\a\supset \partial V_i^\a\cap \partial V_j^\a \qquad \text{and}\qquad H_{ij}^\a\supset \partial K^\a\cap V_i^\a,
$$
and let us define
$\sdist(\pi_i^\a(O),\,\partial H^\a_{ij})$ the ``signed distance of $\pi_i^\a(O)$ from $\partial H_{ij}^\a$'', namely 
$$
\sdist(\pi_i^\a(O),\,\partial H^\a_{ij})
=\left\{
\begin{array}{ll}
\dist(\pi_i^\a(O),\,\partial H_{ij}^\a) &\text{if }\pi_i^\a(O) \in H^\a_{ij},\\
-\dist(\pi_i^\a(O),\,\partial H_{ij}^\a) &\text{if }\pi_i^\a(O) \not \in  H_{ij}^\a.
\end{array}
\right.
$$
Then, with this definition, using  the formula for the volume of a cone we get
$$
\mathcal H^{n-1}(\partial K^\a\cap V_i^\a)= \frac1n\sum_{j \neq i} b^\a_{ij} \sdist(\pi_i^\a(O),\,\partial H^\a_{ij}),
$$
from which we deduce that
$$
|K^{\a'}|-|K^\a|=\frac 1 n\sum_{1\le i,\,j\le N,\,i\neq j}b^\a_{ij}(d_i^{\a'}-d_i^\a)\sdist(\pi_i^\a(O),\,\partial H^\a_{ij})+o(1)|\a-\a'|.$$
Since by assumption $|K^{\a'}|=|K^\a|$, this proves that 
\begin{equation}
\label{eq:vol aa'}
\sum_{1\le i,\,j\le N,\,i\neq j}b^\a_{ij}(d_i^{\a'}-d_i^\a)\sdist(\pi_i^\a(O),\,\partial H^\a_{ij})=o(1)|\a-\a'|.
\end{equation}

\begin{figure}[ht]
	\centering
	\def\svgwidth{300 pt}
	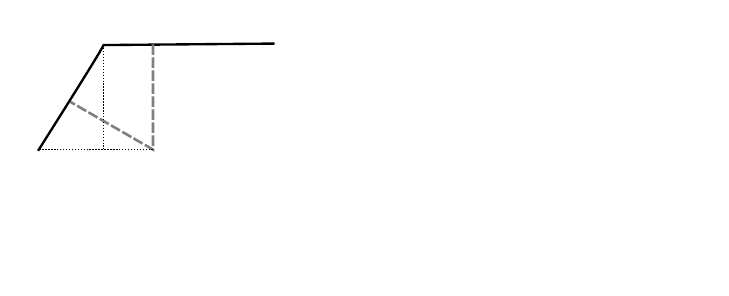

	\caption{We illustrate why formula \eqref{geo} holds. On the left-hand side, we consider the case when the origin $O$ projects inside the $i$-th fact of $\partial K$, while on the right-hand side  the case when the origin $O$ projects outside the $i$-th fact of $\partial K$ (so in this case $\sdist(\pi(O),\partial H_{ij})$ is negative).  }
	\label{fig:surface2}
\end{figure}

Using $\pi_i$ (resp. $\partial H_{ij}$) to denote $\pi_i^0$ (resp. $\partial H^0_{ij}$), we see that 
$$
\sdist(\pi_i^\a(O),\,\partial H^\a_{ij})=\sdist(\pi_i(O),\,\partial H_{ij})+o(1).
$$
Therefore, since $|d_i^{\a'}-d_i^\a|\leq C|\a'-\a|$,
it follows by \eqref{eq:vol aa'} that
\begin{equation}
\label{eq:vol aa'2}
\sum_{1\le i,\,j\le N,\,i\neq j}b^\a_{ij}(d_i^{\a'}-d_i^\a)\sdist(\pi_i(O),\,\partial H_{ij})=o(1)|\a-\a'|.
\end{equation}
Noticing now that
\begin{equation}
\label{geo}
d_j \sin^{-1}(\theta_{ij})-d_i \tan^{-1}(\theta_{ij})=\sdist(\pi_i(O),\,\partial H_{ij}), 
\end{equation}
(see Figure~\ref{fig:surface2}),
combining \eqref{eq:Faa'}, \eqref{eq:vol aa'2}, and \eqref{geo} we obtain that 
$$\mathscr F(K^{\a'})-\mathscr F(K^\a)=o(1)|\a-\a'|,$$ 
as desired.
\end{proof}


\section{Proof of Theorem~\ref{main thm}}

Let $E\subset \mathbb R^n$ be an $(\ez,R)$-minimizer with $|E|=|K|$ and $R\geq n+1$. Thanks to Lemma \ref{lem:E close K}, we know that for $\ez$ small enough $E$ is close to a translation of $K$. In particular, up to a translation, we can apply Lemma \ref{Ka of E} to $E$. We denote by $K^\a\in \mathscr C(K)$ the set provided by such lemma. 

\subsection{An almost identity map}

\begin{figure}[ht]
	\centering
	\def\svgwidth{300 pt}
\begingroup%
  \makeatletter%
  \providecommand\color[2][]{%
    \errmessage{(Inkscape) Color is used for the text in Inkscape, but the package 'color.sty' is not loaded}%
    \renewcommand\color[2][]{}%
  }%
  \providecommand\transparent[1]{%
    \errmessage{(Inkscape) Transparency is used (non-zero) for the text in Inkscape, but the package 'transparent.sty' is not loaded}%
    \renewcommand\transparent[1]{}%
  }%
  \providecommand\rotatebox[2]{#2}%
  \newcommand*\fsize{\dimexpr\f@size pt\relax}%
  \newcommand*\lineheight[1]{\fontsize{\fsize}{#1\fsize}\selectfont}%
  \ifx\svgwidth\undefined%
    \setlength{\unitlength}{209.76377953bp}%
    \ifx\svgscale\undefined%
      \relax%
    \else%
      \setlength{\unitlength}{\unitlength * \real{\svgscale}}%
    \fi%
  \else%
    \setlength{\unitlength}{\svgwidth}%
  \fi%
  \global\let\svgwidth\undefined%
  \global\let\svgscale\undefined%
  \makeatother%
  \begin{picture}(1,0.40540541)%
    \lineheight{1}%
    \setlength\tabcolsep{0pt}%
    \put(0,0){\includegraphics[width=\unitlength,page=1]{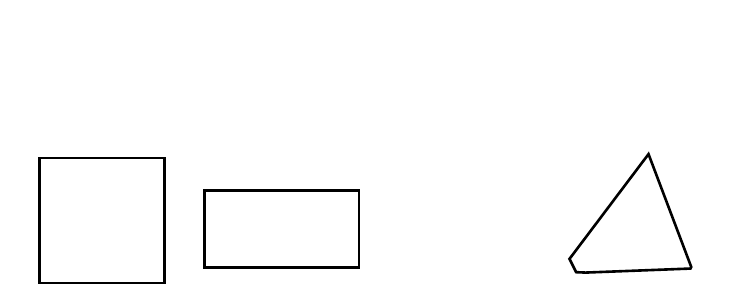}}%
    \put(0.31183516,0.24414093){\color[rgb]{0,0,0}\makebox(0,0)[lt]{\lineheight{1.25}\smash{\begin{tabular}[t]{l}$K$\end{tabular}}}}%
    \put(0,0){\includegraphics[width=\unitlength,page=2]{diff.pdf}}%
    \put(0.62947433,0.23870164){\color[rgb]{0,0,0}\makebox(0,0)[lt]{\lineheight{1.25}\smash{\begin{tabular}[t]{l}$K^\a$\end{tabular}}}}%
    \put(0,0){\includegraphics[width=\unitlength,page=3]{diff.pdf}}%
  \end{picture}%
\endgroup%

	\caption{The sets $K$ and $K^\a$ do not necessarily have the same number of vertices. For example, when $K$ is a pyramid and $K^\a$ is obtained by moving one of the lateral sides of $K$, the number of vertices may increase.\newline
If one looks at one face of $\partial K$ and the corresponding one in $\partial K^\a$, whenever the number of vertices of that face did not increase (as in the square depicted above) then one can find a nice transformation between the two faces. Instead, whenever the number of vertices increases (as in the case of the triangle), then there is a well defined map from the face of $\partial K^\a$ to the one of $\partial K$ which collapses the small triangle onto a segment.
}
	\label{fig:diff}
\end{figure}

Recall that $\partial K^\a\cap V_i^\a$ is parallel to $\partial K\cap V_i$ for all $1\le i\le N$. Moreover by the construction of $K^\a$, there is a natural map between the vertices of  $\partial K^\a$ to those of $\partial  K$ when $|\a|$ is sufficiently small: this map sends each vertex of $\partial K^\a$ to the closest one on $\partial K$. We note that this map is not necessarily one to one, see Figure~\ref{fig:diff}. 

Starting from this map, we construct a map $\phi\colon \partial K^\a\to \partial  K$ as follows: Given a face of $\partial K^\a$,
we split it into 
 a union of simplices with basis at the barycenter of the face, and then we map  each symplex onto the corresponding one of $\partial K$ using an affine map that maps vertices to vertices and the barycenter to the barycenter.
This produces a global piecewise-affine map $\phi$ from $\partial K^\a$ onto $\partial K$.

Note that, if the number of vertices of  a face of $K^\a$ is same as the one of $K$, since $K^\a$ is a small perturbation of $K$, then the corresponding matrix for each of the affine maps is of the following form
${\rm Id}_{n-1}+o(1)A$,
where ${\rm Id}_{n-1}$ is the identity matrix in $\R^{(n-1)\times (n-1)}$, $o(1)$ is a quantity going to zero as $|\a|\to 0$, and $A \in \R^{(n-1)\times (n-1)}$ is a matrix with  bounded entries. 

On the other hand, if some vertices of $\partial K^\a$ are mapped to the same one, 
then the map will be degenerate in some direction. In particular, in some suitable system of coordinates, it will be of the form ${\rm Id}_m+o(1)A$,
where  ${\rm Id}_{m}$ is the identity matrix in $\R^{m}$ for some $m\leq n-2$, $o(1)$ is a quantity going to zero as $|\a|\to 0$, and $A \in \R^{(n-1)\times (n-1)}$ is a matrix with  bounded entries. 

In this way, we obtain a map
$\phi\colon\partial K^\a \to \partial K$ so that, for each $i=1,\ldots,N,$
\begin{equation}\label{dist compare}
\phi(\partial K^\a\cap V_i^\a)= \partial K\cap V_i,\qquad
\dist(x,\,\partial V^\a_i)\le C(n,K)\,\dist(\phi(x),\,\partial V_i)\quad \forall\,x \in \partial K^\a\cap V_i^\a.
\end{equation}
Also, the following a control on its tangential divergence holds:
\begin{equation}\label{almost identity}
{\rm div}_{\tau}\phi\le n-1+o(1)\qquad \text{on }\partial K^\a,
\end{equation}
and
\begin{equation}\label{almost identity 1}
{\rm div}_{\tau}\phi= n-1+o(1) \qquad \text{on }\partial K^\a\setminus T,
\end{equation}
where $\mathcal H^{n-1}(T)=o(1)$ ($T\subset \partial K^\a$ is given by the union of the simplices on which the map $\phi$ is degenerate).

We now consider the vector field $X:\R^n\setminus \{0\}\to \partial K$ defined as
$$X(x):= \phi\left(\frac {x}{f_*^\a(x)}\right). $$
Since $f_*^\a\equiv 1$ on $\partial K^\a$, it follows by \eqref{almost identity} and \eqref{almost identity 1} that
\begin{equation}\label{almost identity 2}
 {\rm div}X\le \frac{n-1+o(1)}{f_*^\a(x)}\qquad \text{on }\R^n,
\end{equation}
and
\begin{equation}\label{almost identity 3}
 {\rm div}X= \frac{n-1+o(1)}{f_*^\a(x)} \qquad \text{on }\R^n \setminus \mathcal{C}_T,
\end{equation}
where $\mathcal{C}_T$ denotes the cone generated by $T$, namely $\mathcal{C}_T=\{tx\colon t>0,\,x \in T\}$.
 
Also, since $X(x)\in K$, we have 
$$f(\nu)=\sup_{z\in K} \nu\cdot z\ge \nu\cdot X(x) \qquad \forall\, x \in \R^n\setminus \{0\}.$$
Furthermore, we note that $X(x)\in \partial K\cap V_i$ for each $x\in V_i^\a\setminus \{0\}$. In particular, 
$f(\nu)=X\cdot \nu$
on $\partial K^\a$.
Thanks to this, we get
\begin{align}
\mathscr F(E)-\mathscr F(K^\a)&=\int_{\partial^* E} f(\nu)\,d\mathcal H^{n-1}-\int_{\partial^* K^\a} f(\nu)\,d\mathcal H^{n-1} \nonumber\\
&=\int_{\partial^* E} f(\nu)\,d\mathcal H^{n-1}-\int_{\partial^* K^\a} X\cdot \nu\,d\mathcal H^{n-1}\nonumber \\
&=\int_{\partial^* E} [f(\nu)-X\cdot \nu]\,d\mathcal H^{n-1}+ \int_{\partial^* E} X\cdot \nu\,d\mathcal H^{n-1}-\int_{\partial^* K^\a} X\cdot \nu\,d\mathcal H^{n-1} \label{A+B}
\end{align}
Set
$$I:=\int_{\partial^* E} [f(\nu)-X\cdot \nu]\,d\mathcal H^{n-1}$$
and
$$II:=\int_{\partial^* E} X\cdot \nu\,d\mathcal H^{n-1}-\int_{\partial^* K^\a} X\cdot \nu\,d\mathcal H^{n-1}.$$
We estimate $I$ and $II$ in the following two subsections, respectively.

\subsection{Estimate on I}

In this section we prove that, if $|\a|$ is sufficiently small, then
\begin{equation}\label{main aim 1}
I\ge c(n,K)\,|E\Delta K^\a|
\end{equation}

By additivity of the integral, it suffices to prove that 
$$
\int_{\partial^* E\cap V_i^\a} [f(\nu)-X\cdot \nu]\,d\mathcal H^{n-1}\ge c(n,K)\, |(E\Delta K^\a)\cap V_i^\a|\qquad \forall\,i=1,\ldots,N.
$$
To simplify the statement, we remove the subscript $i$. Thus, we assume that $V^\a$ is one of the cones for $K^\a$, and $V$ the corresponding cone for $K$. Also, up to a change of coordinate we can assume that for $x\in V^\a$ we have $f^\a_*(x)=\az x_n$ for some $\az>0$ (equivalently, $\partial K^\a\cap V^\a$ is contained in the hyperplane $\{x_n=\az^{-1}\}$). 
Then our aim is to show 
\begin{equation}
\label{eq:goal I}
\int_{\partial^* E\cap V^\a} [f(\nu)-X\cdot \nu]\,d\mathcal H^{n-1}\ge c(n,K)\, |(E\Delta K^\a)\cap V^\a|.
\end{equation}
Note that, since $\partial^* E$ is close to $\partial K$ (see Lemma \ref{lem:E close K}), $\partial^* E$ is uniformly away from the origin and $X$ is well-defined and uniformly Lipschitz on $\partial^* E$.

By the definition of $f$ and \eqref{dist compare}, for any outer normal $\nu$ at $x\in \partial^* E\cap V$  we have
$$ f(\nu) -\nu\cdot X\ge \sup_{y\in \partial K\cap \partial V} \nu\cdot y -\nu\cdot X.$$
Since $X(x) \in \partial K\cap V$ for $x \in V^\a\setminus \{0\}$, it follows that
$$
\text{$y-X(x)$ is parallel to $\partial K\cap V\quad$ for all $x \in \partial^*E\cap V,\,y \in \partial K\cap \partial V$.}
$$ 
In particular, denoting by $\nu'$ the projection of $\nu$  onto the first $(n-1)$-variables in $\mathbb R^{n}$ (namely, $\nu=(\nu',\nu_n)\in \R^{n-1}\times \R$), choosing $y\in \partial K\cap \partial V$ so that $y-X$ is parallel to $\nu'$ and recalling \eqref{dist compare}, we deduce that
\begin{multline*}
f(\nu) -\nu\cdot X\ge \nu\cdot (y -  X)\ge c(n,K) \,|\nu'| \dist(X(x),\,\partial K\cap \partial V)\\
\ge c(n,K)\, |\nu'| \dist\Big(\frac{x}{f_*^\a(x)},\, \partial V^\a\Big) \ge c(n,K)\, |\nu'| \dist(x,\, \partial V^\a),
\end{multline*}
where the last inequality follows from the fact that $f_*^\a\leq 2$ on $\partial^*E$ (since $\partial^* E$ is close to $\partial K$).

 Then, by the coarea formula on rectifiable sets (see e.g. \cite[Theorem 18.8]{M2012}), 
 \begin{equation}
 \label{eq:coarea}
 \begin{split}
 \int_{\partial^* E\cap V^\a} [f(\nu)-X\cdot \nu]\,d\mathcal H^{n-1} 
&\ge  c(n,K)  \int_{\partial^*  E\cap V^\a } |\nu'| \dist(x,\,\partial V^\a) \, d\mathcal H^{n-1}\\
&=  c(n,K)  \int_{\mathbb R} \int_{  (\partial^*  E\cap V^\a)_t } \dist(x,\,\partial V^\a)  \, d\mathcal H^{n-2} \, dt,
 \end{split}
 \end{equation}
where, given a set $F$, we denote by $(F)_t$ the slice at height $t$, that is
$$(F)_t:=\{x\in F \colon x_n=t\}. $$
Recalling that $\partial K^\a\cap V^\a\subset \{y\in \mathbb R^n\colon y_n=\az^{-1} \}$, we now consider two cases, depending on whether
$$\mathcal H^{n-1}((E)_{\az^{-1}} \cap V^\a )\le \frac 1 2 \mathcal H^{n-1}((V^\a)_{\az^{-1}})$$
or not.

In the first case,
since $\partial  E\cap V^\a$ is almost a  graph with respect to the $x_n$ variable (see Lemma~\ref{lem Lip}),  for all $t\ge \az^{-1}$ and for $\ez$ small enough we have
$$
\mathcal H^{n-1}((E)_t \cap V^\a )\le \mathcal H^{n-1}((E)_{\az^{-1}} \cap V^\a)+C\ez \leq \frac 34 \mathcal H^{n-1}((V^\a)_{\az^{-1}}).
$$ 
Hence, since
$$\mathcal H^{n-2}((\partial^* E\cap V^\a)_t)=\mathcal  H^{n-2}(\partial^* (E)_t \cap V^\a_t)$$
for a.e. $t$ (see e.g. \cite[Theorem 18.11]{M2012}), we can apply Lemma \ref{weighted ineq} with $F=(E)_t \cap V^\a$ and $\Omega=(V^\a)_t$ to get
\begin{align*}
\int_{\mathbb R} \int_{  (\partial^* E\cap V^\a)_t } \dist(x,\,\partial V^\a)  \, d\mathcal H^{n-2} \, dt&\geq 
\int_{t \geq \alpha^{-1}} \int_{  (\partial^* E\cap V^\a)_t } \dist(x,\,(\partial V^\a)_t)  \, d\mathcal H^{n-2} \, dt \\
&\ge  c(n,K) \int_{t \geq \alpha^{-1}}   \mathcal H^{n-1}((E)_t \cap V^\a) \, dt\\
&=    c(n,K) \,|(E\setminus K^\a)\cap V^\a|. 
\end{align*}
Since $|E\cap V^\a|=|K^\a\cap V^\a|$ (see Lemma~\ref{Ka of E}) it follows that
$$ |(E \setminus K^\a)\cap V^\a|= \frac 1 2|(E\Delta K^\a)\cap V^\a|,$$
and we conclude that 
$$
\int_{\partial^*  E\cap V^\a } [f(\nu) -\nu\cdot X] \, d\mathcal H^{n-1}\ge c(n,K)\, |(E\cap V^\a)\Delta K^\a|,
$$
as desired.

In the second case, namely when
$$\mathcal H^{n-1}((E)_{\az^{-1}} \cap V^\a )> \frac 1 2 \mathcal H^{n-1}((V^\a)_{\az^{-1}}),$$
we simply apply the argument above to $V^\a\setminus E$, and conclude as before.

\subsection{Estimate on II}
In this section we prove that
\begin{equation}\label{main aim 2}
II\ge -\bigl(C\ez^{1/n}+o(1)\bigr)\,|E\setminus K^\a|,
\end{equation}
where $C=C(n,K)$, and $o(1)$ is a quantity that goes to $0$ as $|\a|\to 0$.

Towards this, we note that, by 
the volume constraint $|E|=|K^\a|$, it follows that 
$$|E\setminus K^\a|=|K^\a\setminus E|.$$
Also, since $\partial E$ and $\partial K$ are $(C\ez^{1/n})$-close (see Lemma \ref{lem:E close K}),
it follows by \eqref{almost identity 2} and \eqref{almost identity 3} that 
$$
|{\rm div} X-(n-1)|\leq C\ez^{1/n}+o(1)\qquad \text{on }(E\Delta K^\a)\setminus \mathcal{C}_T,
$$
and
$$
-C\leq {\rm div} X-(n-1)\leq C\ez^{1/n}+o(1)\qquad \text{on }(E\Delta K^\a)\cap \mathcal{C}_T,
$$
where $C=C(n,K).$
Hence, by the divergence theorem we get
\begin{equation}
\label{eq:II}
\begin{split}
II=\int_{\partial^* E } X\cdot \nu\,d\mathcal H^{n-1}&-\int_{\partial^* K^\a} X\cdot \nu\,d\mathcal H^{n-1}\\
&=\int_{E\setminus K^\a } {\rm div}X \,d\mathcal H^{n-1}-\int_{  K^\a\setminus E} {\rm div}X \,d\mathcal H^{n-1}\\
&=\int_{E\setminus K^\a } [{\rm div}X-(n-1)] \,d\mathcal H^{n-1}\\
&\qquad -\int_{  K^\a\setminus E} [{\rm div}X -(n-1)] \,d\mathcal H^{n-1}\\
& \ge \int_{  (E\setminus K^\a)\cap \mathcal{C}_T} [{\rm div}X-(n-1)] \,d\mathcal H^{n-1}-\bigl(C\ez^{1/n}+o(1)\bigr)|K^\a\Delta E| \\
&\ge -C|(E\setminus K^\a)\cap \mathcal C_T| - \bigl(C\ez^{1/n}+o(1)\bigr)|K^\a\Delta E|,
\end{split}
\end{equation}
Thus, to conclude the proof, we need to show that $|(E\setminus K^\a)\cap \mathcal C_T|$ is small compared to $|K^\a\Delta E|$.

To this aim, we write 
$$K^\a_{1+r}=(1+r)K^\a$$
 and note that
$$\partial K_{1+r}^\a=\{x\in \mathbb R^n \colon f_*^\a(x)=1+r\}.$$
Then, since $|\nabla f_*^\a|\leq C(n,K)$ and $f(\nu)\geq c(n,K)>0$, by the classical coarea formula (see for instance \cite[Section 3.4.4, Proposition 3]{EG1992}) we get
\begin{multline*}
|E\setminus K^\a|=\int_{0}^{\infty}\int_{\partial^* K_{1+r}^\a} \frac 1 {|\nabla f_*^\a|}\chi_{E\setminus K^\a}\,d\mathcal H^{n-1}\,dr\\
\ge c(n,K)\int_{0}^{\infty}  \mathcal H^{n-1}(E\cap \partial K_{1+r}^\a) \,dr\ge c(n,K)\int_{0}^{\infty}  \int_{E\cap \partial^* K_{1+r}^\a} f(\nu)\,d\mathcal H^{n-1} \,dr.
\end{multline*}
Thus, setting for simplicity $\v:=|E\setminus K^\a|$, we get
$$
\v\ge c(n,K)\int_{0}^{\infty}  \int_{E\cap \partial^* K_{1+r}^\a} f(\nu)\,d\mathcal H^{n-1} \,dr
\ge c(n,K)\int_{0}^{M\v}  \int_{E\cap \partial^* K_{1+r}^\a} f(\nu)\,d\mathcal H^{n-1} \,dr,
$$
for some large constant $M$ to be determined. Then by the mean value theorem, there exists $r_0\in[0,\,M\v]$ so that
$$\int_{E\cap \partial^* K_{1+r_0}^\a} f(\nu)\, d\mathcal H^{n-1}\le \frac 1 M. $$
Since $\partial E$ is almost a Lipschitz graph by Lemma~\ref{lem Lip}, we conclude that
\begin{equation}\label{small perimeter}
\int_{E\cap \partial^* K_{1+r}^\a} f(\nu)\, d\mathcal H^{n-1}\le \frac 1 M+C\ez\qquad \forall\, r \geq r_0.
\end{equation}
Moreover, recalling that $\mathcal C_T$ is the cone over a set $T\subset \partial K^\a$ satisfying $\mathcal H^{n-1}(T)=o(1)$, we further have
\begin{equation}\label{small volume}
|(E\setminus K^\a)\cap \mathcal C_T\cap K_{1+r_0}^\a|\le C(K)r_0 \mathcal H^{n-1}(T\cap K^\a)\le o(1)M\v. 
\end{equation}

We now claim that, if $M=M(n,K)$ is sufficiently large, then
\begin{equation}\label{quantify}
E\subset K^\a_{1+r_0}
\end{equation}
(recall that $r_0 $ depends on $M$). 
 Towards this, we first show the following lemma. 
\begin{lem}\label{lower estimate f}
Let  $V$  be a cone for $\partial K$, and denote by $x_0$ the barycenter of the face $\partial K\cap V$. Then, for any $\nu\in \mathbb R^{n}$ we have
$$f(\nu)\ge \nu\cdot x_0+c(n,K)\left|\nu'\right|, $$
where $\nu'$ denotes the projection of $\nu$  onto the hyperplane parallel to $\partial K\cap V$.
\end{lem}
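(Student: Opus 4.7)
The plan is to pick a clever element $z \in K$ and use the identity $f(\nu) = \sup_{z \in K} \nu \cdot z$ to obtain the required lower bound.

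First I would split $\nu = \nu' + \nu_\perp$, where $\nu'$ lies in the hyperplane parallel to $\partial K \cap V$ and $\nu_\perp$ is the normal component. Since $x_0 \in \partial K \cap V \subset K$, taking $z = x_0$ already yields $f(\nu) \geq \nu \cdot x_0$, so everything reduces to producing a slightly larger competitor that captures the extra term $c(n,K)|\nu'|$. If $\nu' = 0$, the inequality holds trivially, so I may assume $\nu' \neq 0$ and set $\hat e := \nu'/|\nu'|$, which is a unit vector parallel to the face $\partial K \cap V$.

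The key geometric observation is that $x_0$, being the barycenter of the convex polytope $\partial K \cap V$, lies in the relative interior of that face, and has a positive distance from the relative boundary. Since $K$ has finitely many faces, we can take
$$
c(n,K) := \min_{1 \leq i \leq N}\,\dist\bigl(\text{barycenter of }\partial K \cap V_i,\; \partial(\partial K \cap V_i)\bigr) > 0.
$$
Consequently, $x_0 + c(n,K)\,\hat e \in \partial K \cap V \subset K$, so choosing this point as a test competitor in the definition of $f$ gives
$$
f(\nu) \geq \nu \cdot \bigl(x_0 + c(n,K)\,\hat e\bigr) = \nu \cdot x_0 + c(n,K)\, \nu \cdot \hat e.
$$
Finally, since $\hat e$ lies in the hyperplane parallel to $\partial K \cap V$ while $\nu_\perp$ is orthogonal to it, one has $\nu \cdot \hat e = \nu' \cdot \hat e = |\nu'|$, which yields the claim.

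There is essentially no obstacle: the only point requiring justification is the uniform lower bound on the relative-interior distance of the barycenter, which is immediate from the finiteness of the face collection and the minimality condition \eqref{eq:minimal} (which guarantees every face has positive $(n-1)$-measure, so the barycenter genuinely sits in the relative interior).
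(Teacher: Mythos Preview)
Your argument is correct and follows essentially the same idea as the paper: both proofs exploit that the barycenter $x_0$ lies in the relative interior of the face at a uniform positive distance from its relative boundary, and then move from $x_0$ in the direction $\nu'/|\nu'|$ to produce a competitor in $\overline{K}$. The only cosmetic difference is that the paper pushes all the way to a point $x\in\partial K\cap\partial V$ with $x-x_0$ parallel to $\nu'$, whereas you stop at the fixed inscribed radius $c(n,K)$; the resulting constants are the same.
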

\begin{proof}
By the definition of $f$ we have
$$f(\nu)\ge \sup_{x\in \partial K\cap \partial V} \nu\cdot x\ge \nu\cdot x_0+\sup_{x\in \partial K\cap \partial V} \nu\cdot (x-x_0).$$
Notice that
$x-x_0$ is parallel to the fact $\partial K\cap V$. Thus, by choosing $x\in \partial K\cap \partial V$
so that $x-x_0$ is parallel to $\nu'$,
and noticing that  $|x-x_0|\geq c(n,K)>0$ (since $x_0$ is the barycenter of $\partial K\cap V$), we  obtain
$$f(\nu)\ge\nu\cdot x_0+ \nu\cdot (x-x_0) \ge \nu\cdot x_0+c(n,K) |\nu'|,$$
as desired.
\end{proof}

Now, we fix one of the cones $V^\a=V_i^\a$, and  apply Lemma~\ref{lower estimate f} to $V=V_i$.
Up to a change of variables we can assume that the normal of $\partial K\cap V$ is given by $e_n$.
Hence, denoting by $\nu'$ the projection of $\nu$  onto the first $(n-1)$-variables (i.e., $\nu =(\nu',\nu_n) \in \R^{n-1}\times \R$), we have
\begin{multline*}
\int_{(\partial^* E\cap V^\a)\setminus K^\a_{1+r_0}} f(\nu)\,d\mathcal H^{n-1}\\ \ge \int_{(\partial^* E\cap V^\a)\setminus K^\a_{1+r_0}} \nu\cdot x_0 \,d\mathcal H^{n-1}+ c(n,K)\int_{(\partial^* E\cap V^\a)\setminus K^\a_{1+r_0}}|\nu'|\,d\mathcal H^{n-1}.
\end{multline*}
\begin{figure}[ht]
	\centering
	\def\svgwidth{300 pt}
\begingroup%
  \makeatletter%
  \providecommand\color[2][]{%
    \errmessage{(Inkscape) Color is used for the text in Inkscape, but the package 'color.sty' is not loaded}%
    \renewcommand\color[2][]{}%
  }%
  \providecommand\transparent[1]{%
    \errmessage{(Inkscape) Transparency is used (non-zero) for the text in Inkscape, but the package 'transparent.sty' is not loaded}%
    \renewcommand\transparent[1]{}%
  }%
  \providecommand\rotatebox[2]{#2}%
  \newcommand*\fsize{\dimexpr\f@size pt\relax}%
  \newcommand*\lineheight[1]{\fontsize{\fsize}{#1\fsize}\selectfont}%
  \ifx\svgwidth\undefined%
    \setlength{\unitlength}{209.76377953bp}%
    \ifx\svgscale\undefined%
      \relax%
    \else%
      \setlength{\unitlength}{\unitlength * \real{\svgscale}}%
    \fi%
  \else%
    \setlength{\unitlength}{\svgwidth}%
  \fi%
  \global\let\svgwidth\undefined%
  \global\let\svgscale\undefined%
  \makeatother%
  \begin{picture}(1,0.40540541)%
    \lineheight{1}%
    \setlength\tabcolsep{0pt}%
    \put(0.09078206,0.03913019){\color[rgb]{0,0,0}\makebox(0,0)[lt]{\lineheight{1.25}\smash{\begin{tabular}[t]{l}$O$\end{tabular}}}}%
    \put(0,0){\includegraphics[width=\unitlength,page=1]{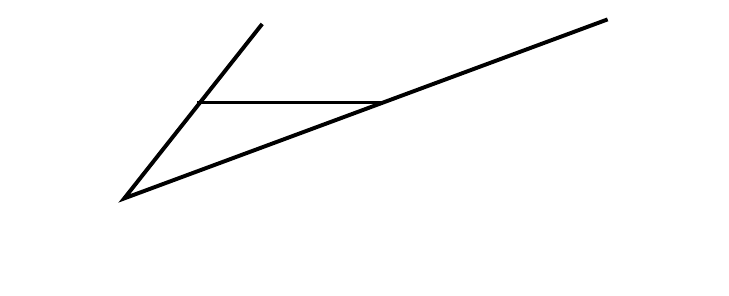}}%
    \put(0.7199956,0.2879982){\color[rgb]{0,0,0}\makebox(0,0)[lt]{\lineheight{1.25}\smash{\begin{tabular}[t]{l}$V$\end{tabular}}}}%
    \put(0,0){\includegraphics[width=\unitlength,page=2]{cone.pdf}}%
    \put(0.35530214,0.18156407){\color[rgb]{0,0,0}\makebox(0,0)[lt]{\lineheight{1.25}\smash{\begin{tabular}[t]{l}$x_0$\end{tabular}}}}%
    \put(0,0){\includegraphics[width=\unitlength,page=3]{cone.pdf}}%
    \put(0.07826038,0.18156407){\color[rgb]{0,0,0}\makebox(0,0)[lt]{\lineheight{1.25}\smash{\begin{tabular}[t]{l}$f(e_n)$\end{tabular}}}}%
  \end{picture}%
\endgroup%

	\caption{ The vector $x_0$ and the length of $f(e_n)$ are shown in the figure. The dark grey part represents $K^\a\cap V^{\a}$. 
In our proof, we apply the divergence theorem to the constant vector field $x_0$ inside the set $(E\cap V^\a)\setminus K^\a_{1+r_0}$. }
	\label{fig:cone}
\end{figure}

For the first term, by the divergence theorem applied to the constant vector field $x_0$ inside the set
$$
(E\cap V^\a)\setminus K^\a_{1+r_0},
$$
we get
$$
\int_{(\partial^* E\cap V^\a)\setminus K^\a_{1+r_0}} \nu\cdot x_0 \,d\mathcal H^{n-1}=\int_{\partial K^\a_{1+r_0}\cap V^\a \cap E} e_n\cdot x_0 \,d\mathcal H^{n-1}- \int_{(\partial V^\a \cap E)\setminus K^\a_{1+r_0}} \nu\cdot x_0 \,d\mathcal H^{n-1}.
$$
Note that, since $x_0 \in V^\a$ (for $|\a|\ll 1$) and $V^\a$ is a convex cone, it follows that $\nu\cdot x_0\leq 0$ on $(\partial V^\a \cap E)\setminus K^\a_{1+r_0}$. Also, $x_0\cdot e_n=f(e_n)=f(\nu)$ on $\partial K^\a_{1+r_0}$ (see Figure \ref{fig:cone}). Hence
$$
\int_{(\partial^* E\cap V^\a)\setminus K^\a_{1+r_0}} \nu\cdot x_0 \,d\mathcal H^{n-1}\geq \int_{\partial K^\a_{1+r_0}\cap V^\a \cap E} f(\nu) \,d\mathcal H^{n-1}.
$$

For the second term, we apply first the coarea formula for rectifiable sets (see e.g. \cite[Theorem 18.8]{M2012}) to get
$$
\int_{(\partial^* E\cap V_i^\a)\setminus K^\a_{1+r_0}}|\nu'|\,d\mathcal H^{n-1}=\int_{r_0}^{\infty} \int_{\partial^* E\cap V_i^\a \cap \partial^* K^\a_{1+r}}\, d\mathcal H^{n-2}\,dr.
$$
Then, provided $M^{-1}$ and $\ez$ are small enough, thanks to \eqref{small perimeter} we can apply the relative isoperimetric inequality to $E\cap V^\a\cap \partial K^\a_{1+r}$ inside the convex set $\partial K^\a_{1+r}$ for $r \geq r_0$  to obtain
\begin{align*}
\int_{r_0}^{\infty} \int_{\partial^* E\cap V^\a \cap \partial^* K^\a_{1+r}}\, d\mathcal H^{n-2}\,dr &\ge c(n,K)    \int_{r_0}^{\infty}\left(\mathcal H^{n-1}(E\cap V^\a \cap \partial^* K^\a_{1+r})\right)^{\frac {n-2} {n-1}}\, dr\\
&\ge c(n,K)\left(M^{-1}+C\ez\right)^{-\frac 1 {n-1}}  \int_{r_0}^{\infty}\mathcal H^{n-1}(E\cap V^\a \cap \partial^* K^\a_{1+r})\, dr\\
&= c(n,K) \left(M^{-1}+C\ez\right)^{-\frac 1 {n-1}} |(E\cap V^\a)\setminus K^\a_{1+r_0}|,
\end{align*} 
where the second inequality follows by \eqref{small perimeter}.

Combining all the previous estimates, we proved that
\begin{multline*}
\int_{(\partial^* E\cap V^\a)\setminus K^\a_{1+r_0}} f(\nu)\,d\mathcal H^{n-1}- \int_{\partial K^\a_{1+r_0}\cap V^\a \cap E} f(\nu) \,d\mathcal H^{n-1}\\
\geq c(n,K) \left(M^{-1}+C\ez\right)^{-\frac 1 {n-1}} |(E\cap V^\a)\setminus K^\a_{1+r_0}|,
\end{multline*}
and by summing this inequality over all cones $V^\a_i$ we conclude that 
\begin{equation}\label{comparison}
\int_{\partial^* E\setminus K^\a_{1+r_0}} f(\nu)\, d\mathcal H^{n-1}-\int_{\partial^* K^\a_{1+r_0} \cap E} f(\nu)\, d\mathcal H^{n-1}\ge c(n,K) \left(M^{-1}+C\ez\right)^{-\frac 1 {n-1}} |E\setminus K^\a_{1+r_0}|.
\end{equation} 

On the other hand, if we test the 
$(\ez,\,R)$-minimality of $E$ against the set
$$G=(1+\lambda)[E\cap K^\a_{1+r_0}],$$ where 
 $\lambda$ is chosen so that
 $$\left|(1+\lambda)[E\cap K^\a_{1+r_0}]\right|=|E|$$
(note that this set is admissible for $\ez$, and hence $|E\Delta K|$, small enough), we obtain  
$$\mathscr F(E)\le (1+\lambda)^{n-1}\mathscr F(E\cap K^\a_{1+r_0})+C\lambda \ez \leq \mathscr F(E\cap K^\a_{1+r_0}) +C(n,K)\lambda.$$
Note that, by the definition of $\lambda$ and Lemma~\ref{lem:E close K}, we easily get the bound 
$$\lambda \leq C(n,K)|E\setminus K_{1+r_0}^\a|,$$
and hence   deduce that 
$$\mathscr F(E)\le \mathscr F(E\cap K^\a_{1+r_0}) +C(n,K)|E\setminus K_{1+r_0}^\a|.$$
Combining this bound with \eqref{comparison}, we conclude that 
\begin{align*}
c(n,K)\left(M^{-1}+C\ez\right)^{-\frac 1 {n-1}} |E\setminus K^\a_{1+r_0}|&\le \int_{\partial^* E\setminus K^\a_{1+r_0}} f(\nu)\, d\mathcal H^{n-1}-\int_{\partial^* K^\a_{1+r_0} \cap E} f(\nu)\, d\mathcal H^{n-1}\\
&= \mathscr F(E)- \mathscr F(E\cap K^\a_{1+r_0})\le C(n,K) |E\setminus K^\a_{1+r_0}|.
\end{align*}
Thus, for $M=M(n,K)>0$ sufficiently large and $\ez \leq \ez_0(n,K)$ small enough, we conclude that $|E\setminus K^\a_{1+r_0}|=0$,
and \eqref{quantify} follows. Then, \eqref{main aim 2} is an immediate consequence of \eqref{eq:II}, \eqref{small volume}, and \eqref{quantify}.

\subsection{Conclusion}

Since $E$ is an $(\ez,R)$-minimizer with $R\geq n+1$, for $\ez \ll 1$ we 
have that $|\a|\ll 1$, and therefore
$$\mathscr F(E)\le \mathscr F(K^\a)+\ez|E\Delta K^\a|. $$
On the other hand, combining \eqref{A+B}, \eqref{main aim 1}, and \eqref{main aim 2}, we get
$$\mathscr F(E)- \mathscr F(K^\a)\ge c(n,K)|E\Delta K^\a|.$$
Choosing $\ez$ sufficiently small we conclude that $|E\Delta K^\a|=0$, which proves Theorem~\ref{main thm}.

\section{Proof of Theorem~\ref{characterization} and ~\ref{stability}}
\begin{proof}[Proof of Theorem~\ref{characterization}]
	Let $K^\a\in \mathscr C(K)$ with small $|\a|$, and let $\ez_0$ be the constant in Theorem~\ref{main thm}. Then by Theorem~\ref{main thm}, the following variational problem,
\begin{equation}\label{eps R}
\min\left\{\mathscr F (E)+\ez|E\Delta K^\a|\colon |E|=|K^\a| \right\}
\end{equation}
	among all sets of finite perimeter $E$ with $\dist(x,\,K^\a)\le R$ for any $x\in E$, has a solution $K^{\a'}$, where $\ez\le \ez_0$.
	Then we have
	$$\mathscr F (K^{\a'})+\ez|K^{\a'}\Delta K^\a|\le \mathscr F(K^\a).$$
Since
	$$|K^{\a'}\Delta K^\a|\ge c(n,K)|\a'-\a|$$
	(this follows easily by the argument used in the proof of Lemma \ref{lem:F C1}),
we conclude that
	$$c(n,K)\ez|\a'-\a|\le  o(1)|\a -\a'|,$$
which proves that $\a'=\a$ for $|\a|+|\a'| $ sufficiently small.
Therefore, $K^\a$ is the unique minimizer of  \eqref{eps R}, and then the theorem follows. 
\end{proof}

\begin{proof}[Proof of Theorem~\ref{stability}]
	Choose $\sigma_0$ small so that we can apply Lemma~\ref{Ka of E} to obtain a set $K^\a$ for $E$. We apply the idea of \cite{FJ2014}, i.e. the selection principle.
	
	Towards this let us assume that the conclusion of the theorem fails.
	Then there exist a sequence of sets of finite perimeter $E_k$, and vectors $\a_k \in \R^N$ with $|\a_k|$ small, such that $|E_k\Delta K|\to 0$, $|E_k\cap V_i^{\a_k}|=|K^{\a_k}\cap V_i^{\a_k}|$ for any $1\le i\le N$, $\mathscr F(E_k )- \mathscr F(K^{\a_k} ) \to 0$,
	and
	\begin{equation}\label{assump} 
 0<\mathscr F(E_k)-  \mathscr F(K^{\a_k})< \frac{\lambda}{2}|E_k\Delta K^{\a_k}|
	\end{equation}
	for some $\lambda>0$ small enough (the smallness will be fixed later).
	Notice that, since $|E_k\Delta K|\to 0$,  we have $|\a_k|\to 0$.
	
%

Consider the following variation problem:
\begin{equation}\label{sp}
\min\left\{\mathscr F(E)+\lambda\big||E\Delta K^{\a_k}|-|E_k\Delta K^{\a_k}|\big|+C_0\lambda \sum_{1\le i\le N} \big| |E\cap V_i^{\a_k}| -|K^{\a_k}\cap V_i^{\a_k}|\big|\colon |E|=|K^{\a_k}| \right\}
\end{equation}
with $C_0=C_0(n,K)>0$ to be determined.  Since the functional involved in the problem is lower semicontinuous with respect to the $L^1$-convergence, there exists a minimizer $F_k$ among sets of finite perimeter (cf. \cite[Chapter 12]{M2012}). 
Thus, for any set $G\subset \mathbb R^n$ with $|G|=|F_k|$ we have
\begin{align*}
\mathscr F(F_k)&\le \mathscr F(G)+\lambda\Big(\big||G\Delta K^{\a_k}|-|E_k\Delta K^{\a_k}|\big|-\big||F_k\Delta K^{\a_k}|-|E_k\Delta K^{\a_k}|\big|\Big)\\
&\qquad +C_0\lambda \sum_{1\le i\le N}\Big( \big| |G\cap V_i^{\a_k}| -|K^{\a_k}\cap V_i^{\a_k}|\big|
- \big| |F_k\cap V_i^{\a_k}| -|K^{\a_k}\cap V_i^{\a_k}|\big|\Big) \\
 &\le  \mathscr F(G)+\lambda  \big||G\Delta K^{\a_k}|-|F_k\Delta K^{\a_k}|\big| +C_0\lambda \sum_{1\le i\le N} \big| |G\cap V_i^{\a_k}|  
- |F_k\cap V_i^{\a_k}| \big|\\
& \le  \mathscr F(G)+\lambda  | F_k\Delta G |+C_0\lambda \sum_{1\le i\le N} |(F_k\Delta G)\cap V_i^{\a_k}|= \mathscr F(G)+(1+C_0)\lambda  | F_k\Delta G |.
\end{align*}
This proves that $F_k$ is a $\bigl((1+C_0)\lambda\bigr)$-minimizer, and by choosing $\lambda$ small enough so that  $(1+C_0)\lambda \le \ez_0$ as in Theorem~\ref{main thm}, there exists $\a_k' \in \R^N$ such that $F_k=K^{\a_k'}$.

Now since $K^{\a_k'}$ is the minimizer of \eqref{sp} we have
$$
\mathscr F(K^{\a_k'})+\lambda \big||K^{\a_k'}\Delta K^{\a_k}|-|E_k\Delta K^{\a_k}|\big|\le \mathscr F(E_k).
$$
Thus, noticing that $\mathscr F(E_k)\to \mathscr F(K)$, $E_k\to K$, and $|\a_k|\to 0$, it follows by the inequality above that $|\a_k'|\to 0$.

Therefore, by Lemma~\ref{lem:F C1}, and by testing the minimality of $K^{\a_k'}$ in \eqref{sp} against $K^{\a_k},$  we have
\begin{align*}
\omega(|\a_k|+|\a'_k|) |\a'_k-\a_k| 
&\geq \mathscr F(K^{\a_k})-\mathscr F(K^{\a_k'})\\
&\geq \lambda \big||K^{\a_k'}\Delta K^{\a_k}|-|E_k\Delta K^{\a_k}|\big| -\lambda|E_k\Delta K^{\a_k}| \\
&\qquad +C_0\lambda \sum_{1\le i\le N} \big| |K^{\a_k'}\cap V_i^{\a_k}| -|K^{\a_k}\cap V_i^{\a_k}|\big|\\
&\geq C_0\lambda \sum_{1\le i\le N} \big| |K^{\a_k'}\cap V_i^{\a_k}| -|K^{\a_k}\cap V_i^{\a_k}|\big| -\lambda |K^{\a_k'}\Delta K^{\a_k}|.
\end{align*}
As in the proof of Theorem \ref{characterization} one can note that 
$$
\sum_{1\le i\le N} \big| |K^{\a_k'}\cap V_i^{\a_k}| -|K^{\a_k}\cap V_i^{\a_k}|\big|\geq c(n,K)|\a'_k-\a_k|,\qquad 
|K^{\a_k'}\Delta K^{\a_k}|\leq C(n,K)|\a_k'-\a_k|.
$$
Hence, choosing $C_0=C_0(n,K)$ sufficiently large, we deduce
$$
\omega(|\a_k|+|\a'_k|) |\a'_k-\a_k| 
\geq c(n,K)\lambda |\a_k'-\a_k|,
$$
which proves that $|\a_k|=|\a'_k|$ for $k \gg 1$.
Hence, testing the minimality of $K^{\a_k}$ in \eqref{sp} against $E_k,$ and recalling \eqref{assump},
 we get
$$\mathscr F(K^{\a_k})+\lambda |E_k\Delta K^{\a_k}| \le \mathscr F(E_k)\le \mathscr F(K^{\a_k})+\frac{\lambda}{2} |E_k\Delta K^{\a_k}|,$$
which implies $|E_k\Delta K^{\a_k}|=0$ and $\mathscr F(E_k)=\mathscr F(K^{\a_k})$. This contradicts \eqref{assump}, and 
the theorem follows.

\end{proof}

\appendix
\section{Technical results}
In this appendix we prove some technical results used in the paper.

First of all, we prove a  weighted 
relative isoperimetric inequality.
\begin{lem}\label{weighted ineq}
	Let $\Omega\subset \mathbb R^d$ be a Lipschitz domain, and $F\subset \Omega$ a set of finite perimeter. Then there exists a constant $C=C(d,\,\Omega)>0$ such that 
	$$ |F\cap \Omega| \le C \int_{\partial^* F\cap \Omega}\dist(x,\,\partial \Omega)\, d\mathcal H^{n-1}.,$$
	whenever $|F\cap \Omega|\le \frac 3 4 |\Omega|.$
\end{lem}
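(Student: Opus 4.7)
My strategy is to combine the layer-cake identity for $u(x):=\dist(x,\partial\Omega)$ with the relative isoperimetric inequality applied on the sub-level sets $\Omega^t:=\{x\in\Omega:u(x)>t\}$.

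By Lipschitz regularity of $\partial\Omega$, one can pick $t_0=t_0(\Omega)>0$ so small that for $t\in[0,t_0]$ the open set $\Omega^t$ remains a Lipschitz domain with uniformly controlled relative isoperimetric constant, and $|\Omega\setminus\Omega^t|\le Ct$, with $|\Omega^{t_0}|\ge\tfrac{7}{8}|\Omega|$. Set
\[
f(t):=|F\cap \Omega^t|,\qquad g(t):=\mathcal{H}^{d-1}(\partial^* F\cap\Omega^t),
\]
so that by layer-cake $\int_{\partial^*F\cap\Omega}u\,d\mathcal{H}^{d-1}=\int_0^\infty g(t)\,dt$. The hypothesis $|F|\le \tfrac{3}{4}|\Omega|$ combined with $|\Omega^t|\ge\tfrac{7}{8}|\Omega|$ yields $|\Omega^t\setminus F|\ge\tfrac{1}{8}|\Omega|$ for $t\in[0,t_0]$. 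Applying the relative isoperimetric inequality in $\Omega^t$ to whichever of $F\cap\Omega^t, \Omega^t\setminus F$ has measure at most $|\Omega^t|/2$, and exploiting the lower bound $|\Omega^t\setminus F|\ge |\Omega|/8$ in the second regime to deduce $g(t)\ge c|\Omega|^{(d-1)/d}$, one obtains in both cases
\[
f(t)\le C|\Omega|^{1/d}g(t)\qquad \forall\,t\in[0,t_0].
\]
Integrating this in $t$ and using Fubini,
\[
\int_F\min(u,t_0)\,dx=\int_0^{t_0}f(t)\,dt\le C|\Omega|^{1/d}\int_{\partial^*F\cap\Omega}u\,d\mathcal{H}^{d-1}.
\]

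I would conclude by splitting into two cases. If $|F\cap\Omega^{t_0}|\ge |F|/2$, then the left-hand side above is at least $(t_0/2)|F|$ and the lemma follows with constant $C=C(\Omega)$. Otherwise $|F\cap(\Omega\setminus\Omega^{t_0})|>|F|/2$, which forces $|F|\le 2|\Omega\setminus\Omega^{t_0}|\le Ct_0$, so $F$ is essentially confined to the thin boundary layer $\Omega\setminus\Omega^{t_0}$. In this regime I would cover $\partial\Omega$ by finitely many boundary charts in which $\Omega$ is the epigraph of a Lipschitz function $\phi$ (with $u$ comparable to the vertical distance to the graph of $\phi$), slice $F$ vertically in each chart, and observe that the length of each vertical slice is dominated by the largest ``height'' of its endpoints; since each such endpoint lies on $\partial^*F\cap\Omega$ and has $u$ of the same order as its height, integrating over the horizontal variables and summing over the finitely many charts yields $|F|\le C\int_{\partial^*F\cap\Omega}u\,d\mathcal{H}^{d-1}$ directly.

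The main obstacle I foresee is the slicing argument in the second case: implementing it rigorously for a merely Lipschitz boundary requires careful treatment of the boundary charts and the projection Jacobian, so that each endpoint of a vertical slice of $F$ contributes with the appropriate weight to the surface integral $\int u\,d\mathcal{H}^{d-1}$.
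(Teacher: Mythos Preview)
Your approach is entirely different from the paper's, and considerably more laborious. The paper's proof is three lines: it approximates $\chi_{F}$ by smooth functions $u_k$ in $BV$, applies the weighted Poincar\'e inequality of Boas--Straube \cite{BS1988},
\[
\int_\Omega |u_k|\,dx \le C(\Omega)\int_\Omega |Du_k|\,\dist(x,\partial\Omega)\,dx,
\]
(valid because the fraction condition $|F|\le \tfrac34|\Omega|$ keeps the average of $u_k$ bounded away from $1$), and passes to the limit. No case split, no slicing, no control of the inner parallel domains $\Omega^t$ is needed --- all of that analytic work is hidden inside the cited inequality.

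Your route via layer-cake and the relative isoperimetric inequality on $\Omega^t$ is natural if one wants a self-contained argument, and Steps~1--7 together with Case~A are clean and correct. The real content is Case~B, and here the slicing sketch has a genuine gap beyond the Jacobian technicalities you flag. The claim ``the length of each vertical slice is dominated by the largest height of its endpoints, and each such endpoint lies on $\partial^*F\cap\Omega$'' fails precisely when a vertical slice of $F$ runs from near $\partial\Omega$ out through the top of the chart (equivalently, crosses $\partial\Omega^{t_0}$). At such $x'$ the topmost point of the slice inside the thin layer lies on $\partial\Omega^{t_0}$, not on $\partial^*F$, and there is nothing in your outline that controls the total $(d-1)$-measure of these ``bad'' base points. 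Being in Case~B only tells you $|F\cap\Omega^{t_0}|<|F|/2$, which is a volume bound, not a bound on $\mathcal H^{d-1}(F\cap\partial\Omega^{t_0})$; and the inequality $f(t)\le C|\Omega|^{1/d}g(t)$ you already established only yields $\int u\gtrsim |F|^2$ when $|F|\ll t_0$, which is too weak.

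This can be repaired --- for instance, one can construct a vector field $X$ with $\mathrm{div}\,X\equiv 1$ and $|X|\le C\,\dist(\cdot,\partial\Omega)$ on a Lipschitz domain and apply the divergence theorem, or one can carefully couple the slicing with the bound on $|F\cap\Omega^{t_0}|$ coming from Step~7 --- but none of this is as short as invoking \cite{BS1988}. If a self-contained proof is not a priority, the paper's route is much more efficient.
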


\begin{proof}
	Let $u=\chi_{F\cap \Omega}$ and $u_k$ be a sequence of smooth functions approximating $u$ strongly in $L^1$, $\|Du_k\| \rightharpoonup \|Du\|$ weakly$^*$ as  measures; see e.g. \cite[Theorems 2\&3, Chapter 5.2]{EG1992} and \cite[Chapter 5.4]{EG1992}. 

Then, by applying the weighted Poincar\'e-type inequality from \cite{BS1988} to the functions $u_k$, when $k$ is large enough, it
yields
$$
\int_{\Omega} |u_k|\, dx\le  C \int_{\Omega}  |Du_k| \dist(x,\,\partial \Omega)\, dx,
$$
where $C=C(d,\,\Omega).$
Letting $k \to \infty$ yields the result.
\end{proof}

We now state a Lipschitz regularity result for $\ez$-minimizers.
\begin{lem}
\label{lem Lip}
For any $i=1,\ldots,N$, let $H_i$ denotes the hyperplane containing $\partial K\cap V_i$.
Let $E$ be an $(\ez,n+1)$-minimizer of $\mathscr F$ with $|E|=|K|$. There exist $\bar \ez=\bar \ez(n,K)>0$ and $L=L(n,K)>0$ such that if $\ez \leq \bar\ez$ then the following holds:
For any $i=1,\ldots,N$ there exists a neighborhood $\mathcal U_i$ of $\partial K\cap V_i$ such that, up to a translation of $E$,
$$\partial E\subset \cup_{1\leq i\leq N}\mathcal U_i$$
and
$$
\text{$(\partial E\cap \mathcal U_i)\setminus \Gamma_i$ is a $L$-Lipschitz graph with respect to $H_i$,}
$$ where $\Gamma_i\subset \partial E$ satisfies $\mathcal H^{n-1}(\Gamma_i)\leq C(n,K)\ez$.
\end{lem}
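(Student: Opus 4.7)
My plan is to combine Hausdorff closeness from Lemma~\ref{lem:E close K} with an excess bound coming from minimality, and to apply the divergence theorem to a carefully chosen Lipschitz vector field $X$ taking values in $\partial K$.

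First, Lemma~\ref{lem:E close K} gives (up to translation) $(1-r)K\subset E\subset (1+r)K$ with $r\le C(n,K)\ez^{1/n}$; I would take $\mathcal U_i$ to be a tubular neighborhood of $\partial K\cap V_i$ of thickness $\sim r$, so that $\partial E\subset\bigcup_i\mathcal U_i$. Testing the $(\ez,n+1)$-minimality of $E$ against the competitor $K$ (admissible since $|K|=|E|$ and $K\subset\mathcal N_{n+1,K}(E)$) yields
$$\mathscr F(E)-\mathscr F(K)\le \ez\,|E\Delta K|\le C(n,K)\,\ez^2.$$

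Next, I would pick a Lipschitz map $\phi:\partial K\to\partial K$ sending each face $\partial K\cap V_i$ into itself, contracting it toward the barycenter so that $\phi(\partial K\cap V_i)$ stays at a uniform positive distance from the edges of that face. Setting $X(x):=\phi(x/f_*(x))$ for $x\neq 0$ (smoothly cut off near the origin, which lies in the interior of both $E$ and $K$), I get that $X\in\partial K$, $X$ is Lipschitz and bounded, and $X\cdot\nu_K=f(\nu_K)$ on $\partial K$ (because $\phi$ preserves faces). The divergence theorem then yields
$$
\Bigl|\int_{\partial^* E}X\cdot\nu\, d\mathcal H^{n-1}-\mathscr F(K)\Bigr|\le \|{\rm div}\,X\|_\infty\,|E\Delta K|\le C(n,K)\,\ez,
$$
which combined with the excess bound gives
$$I:=\int_{\partial^* E}[f(\nu)-X\cdot\nu]\, d\mathcal H^{n-1}\le C(n,K)\,\ez.$$

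For $x\in\partial^* E\cap\mathcal U_i$, $X(x)\in\partial K\cap V_i$ is uniformly away from the edges, so the proof of Lemma~\ref{lower estimate f} (applied with $X(x)$ in place of the barycenter) gives $f(\nu)-X\cdot\nu\ge c(n,K)|\nu'_i|$, where $\nu'_i$ is the projection of $\nu_E$ onto $H_i$. Hence $\sum_i\int_{\partial^* E\cap\mathcal U_i}|\nu'_i|\,d\mathcal H^{n-1}\le C(n,K)\,\ez$. I define $\Gamma_i:=\{x\in\partial^* E\cap\mathcal U_i\,:\,\nu_E\cdot\nu_i\le 1-\delta\}$ for a small fixed $\delta=\delta(n,K)$; Chebyshev's inequality (using $|\nu'_i|\gtrsim\sqrt{\delta}$ on $\Gamma_i$) yields $\mathcal H^{n-1}(\Gamma_i)\le C(n,K)\,\ez$. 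On the complement, $\nu_E\cdot\nu_i\ge 1-\delta>0$, so along every vertical line above $H_i$ any additional transition of $\chi_E$ would force the outer normal to satisfy $\nu_E\cdot\nu_i\le -(1-\delta)$, placing it in $\Gamma_i$; hence the complement is the graph of a function $u$ over $H_i$, with Lipschitz bound $L\le\sqrt{2\delta}/(1-\delta)=L(n,K)$. The hardest part will be the construction of $X$ satisfying simultaneously $X\in\partial K$, $X\cdot\nu_K=f(\nu_K)$ on $\partial K$, $\|{\rm div}\,X\|_\infty\le C$, and $X$ uniformly away from the edges on each face; the contraction $\phi$ delivers the last property while the other three are arranged by construction, the loss being absorbed into the $O(\ez)$ error from the divergence theorem.
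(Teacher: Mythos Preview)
The paper's own proof is a two-line citation: it invokes Lemma~\ref{lem:E close K} for Hausdorff closeness and then appeals directly to \cite[Proposition~4.6]{ANP2002} (the Lipschitz approximation theorem for almost-minimizers of crystalline energies). Your proposal is therefore far more ambitious---you try to give a self-contained argument---but it contains a genuine obstruction.

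\textbf{The main gap: the vector field $X$ cannot exist as described.} You ask for a Lipschitz map $\phi:\partial K\to\partial K$ that sends each face $\partial K\cap V_i$ into itself \emph{and} whose image stays at uniform positive distance from the edges of that face. These two requirements are incompatible with continuity: a point $x$ on an edge $\partial K\cap V_i\cap V_j$ belongs to both faces, so $\phi(x)$ would have to lie simultaneously in the interior of face $i$ and in the interior of face $j$, which are disjoint. Hence $\phi$ (and therefore $X$) must be discontinuous across the cone boundaries $\partial V_i$, and the divergence theorem picks up jump terms there.

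\textbf{What you actually get.} If you accept a piecewise-Lipschitz $X$ (say the face-wise contraction by a fixed factor), the jump of $X$ across each $\partial V_i\cap\partial V_j$ is $O(1)$, and the extra boundary contribution in the divergence identity is bounded by $\mathcal H^{n-1}\bigl((E\Delta K)\cap\partial V_i\cap\partial V_j\bigr)\le C\,r\le C\,\ez^{1/n}$, using only the Hausdorff bound from Lemma~\ref{lem:E close K}. This yields $I\le C\,\ez^{1/n}$ rather than $C\,\ez$, and hence only $\mathcal H^{n-1}(\Gamma_i)\le C\,\ez^{1/n}$. The same loss occurs if you replace $X$ by the genuinely Lipschitz field $x\mapsto x/f_*(x)$: then $X$ is near the edges whenever $x$ is, so the lower bound degenerates to $f(\nu)-X\cdot\nu\ge c\,\dist(x,\partial V_i)\,|\nu'|$ (exactly as in Section~3.2 of the paper), which again does not control $\int|\nu'|$ by $C\ez$. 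Getting the sharp $\ez$-bound genuinely requires the finer regularity theory encapsulated in \cite{ANP2002}.

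\textbf{A secondary issue.} Your last step---``the complement of $\Gamma_i$ is a graph''---is not automatic. Knowing $\nu_E\cdot\nu_i>1-\delta$ on $(\partial^*E\cap\mathcal U_i)\setminus\Gamma_i$ does not prevent a single vertical line from meeting this set twice: between two ``up'' transitions there must sit a ``down'' transition (which lies in $\Gamma_i$), but the two up transitions themselves are both in the complement. One needs the standard GMT Lipschitz approximation machinery (projection counting via coarea, then enlarging $\Gamma_i$ to absorb the extra sheets) to conclude; this is routine but not the one-line Chebyshev argument you indicate.
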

\begin{proof}
Thanks to Lemma \ref{lem:E close K}, up to a translation 
we have that $\partial E$ is uniformly close to $\partial K$.
This allows us to apply
 \cite[Proposition 4.6]{ANP2002} and deduce that, for $\ez$ sufficiently small, we can cover almost all the boundary of $E$ with uniformly Lipschitz graphs.
\end{proof}

\end{document}